\newcommand{\bs}{\begin{eqnarray*}}
\newcommand{\es}{\end{eqnarray*}}
\newcommand{\bu}{\bf u}
\newcommand{\bmat}{\begin{pmatrix} }
\newcommand{\emat}{\end{pmatrix}}
\newcommand{\beqs}{\begin{eqnarray*}}
\newcommand{\eeqs}{\end{eqnarray*}}
\newcommand{\beq}{\begin{equation}}
\newcommand{\eeq}{\end{equation}}
\newcommand{\beqa}{\begin{eqnarray}}
\newcommand{\eeqa}{\end{eqnarray}}
\newcommand{\integer}{\hbox{N \kern -.em I}\ }  %Natural Numbers
\newcommand{\real}{\hbox{R \kern -.2em I}\ }  % Real Numbers R
\newcommand{\numbersetmm}{\,\hbox{\scriptsize M  \kern -.2em I}\,\ }
\newcommand{\numbersetm}{\,\hbox{M \kern -.2em I}\,\,\ }
\newcommand{\bv}{{\mathbf v}}
\newcommand{\bw}{{\mathbf w}}
\newcommand{\ep}{\epsilon}
\newcommand{\D}{\displaystyle}
\newcommand{\bx}{\mbox{\boldmath $x$}}
\newcommand{\field}[1]{\mathbb{#1}} % requires amsfonts
\newcommand{\pr}{\partial}
\newcommand{\bd}{\mbox{\boldmath $d$}}
\newcommand{\be}{\mbox{\boldmath $e$}}
\newcommand{\bU}{\mbox{\boldmath $U$}}
\newcommand{\bK}{\mbox{\boldmath $K$}}
\newcommand{\bq}{\mbox{\boldmath $q$}}
\newcommand{\goto}{\rightarrow}
\newcommand{\B}{\mathbf}
\newcommand{\blue}{\color{blue}}
\newtheorem{theorem}{Theorem}[section]
\begin{document}
\pagestyle{plain}

\begin{center}
{\large
%   +----------------------------------+
%   | Please insert the title here ... |
%   +----------------------------------+
%
On the augmented Biot-JKD equations with Pole-Residue representation of the dynamic tortuosity}%

\end{center}
\begin{center}
{\fontsize{12}{16} \bf 
%   +-----------------------------------------+
%   | Please insert names of authors here ... |
%   +-----------------------------------------+
%
Miao-Jung Yvonne Ou$^\dagger$,\footnote{The work of MYO is partially sponsored by NSF-DMS-1413039} 
Hugo J. Woerdeman$^\ddagger$\footnote{HJW is partially supported by Simons Foundation grant 355645}, 
%Seong Jun Kim$^\ddagger$
%
}
\end{center}
\begin{center}
{\fontsize{11}{12} \rm
%   +-----------------------------------------------+
%   | Please insert institutions and email here ... |
%   +-----------------------------------------------+
%
$^\dagger${Department of Mathematical Sciences, University of Delaware, Newark, DE 19716, USA}, mou@udel.edu \\
$^\ddagger${Department of Mathematics, Drexel University, Philadelphia, PA 19104, USA}, hugo@math.drexel.edu
}
\end{center}

\medskip

\centerline{\it Dedicated to our colleague and friend Joe Ball}
 
 \medskip
 
{\fontsize{11}{12}\rm
\abstract
%   +-----------------------------------------+
%   | Please insert text of abstract here ... |
%   +-----------------------------------------+
In this paper, we derive the augmented Biot-JKD equations, where the memory terms in the original Biot-JKD equations are dealt with by introducing auxiliary dependent variables. The evolution in time of these new variables are governed by ordinary differential equations whose coefficients can be rigorously computed from the JKD dynamic tortuosity function $T^D(\omega)$ by utilizing its Stieltjes function representation derived in \cite{ou2014on-reconstructi}, where an {\blue approach} for computing the pole-residue representation of the JKD tortuosity is also proposed. The two numerical schemes presented in the current work for computing the poles and residues representation of $T^D(\omega)$ improve the previous scheme in the sense that they interpolate the function at infinite frequency and have much higher accuracy than the one proposed in \cite{ou2014on-reconstructi}. 
%************* From 2013-proposal
\section{\label{intro}Introduction}
{   Poroelastic composites are two-phase composite materials consisted of elastic solid frames with fluid-saturated pore space. The study of poroelasticity plays an important role in biomechanics, seismology and geophysics due to the nature of objects of research in these fields, eg. fluid saturated rocks, sea ice and cancellous bone. It is of great interest for modeling wave propagation in these materials. When the wave length is much higher than the scale of the microstructure of the composite, homogenization theory can be applied to obtain the effective wave equations, in which the fluid and the solid coexist at ever point in the poroelastic material. M. A. Biot derived the governing equations for wave propagation in linear poroelastic composite materials in \cite{biot1956theory-high} and \cite{biot1956theory-of-propa}. The former deals with the low-frequency regime where the friction between the viscous pore fluid and the elastic solid can be assumed to be linear proportional with the difference between the effective pore fluid velocity and the effective solid velocity by a real number $b$, which is independent of frequency $\omega$; this set of equation is referred to as the low-frequency Biot equation. When the frequency is higher than the critical frequency of the poroelastic material, $b$ will be frequency-dependent; this is the subject of study in \cite{biot1956theory-of-propa}.  The exact form of $b$ as a function of frequency was derived in \cite{biot1956theory-of-propa} for pore space with its micro-geometry being circular tubes. A more general expression was derived in the seminal paper \cite{johnson1987theory-of-dynam} by Johnson, Koplik and Dashen (JKD), where causality argument was applied to derive the 'simplest' form of $b$ as a function of frequency. This frequency dependence of $b$ results in a time-convolution term in the time-domain poroelastic wave equations; the kernel in the time-convolution term is called the 'dynamic tortuosity' in the literature. The Biot-JKD equations refer to the Biot equations with $b$ being the JKD-tortuosity in \eqref{tortuosity-JKD}.

In general, the dynamic tortuosity function is a tenor, which is related to the symmetric, positive definite dynamic permeability tensor of the poroelastic material $\B{K}(\omega)$ by 
$T(\omega)=\frac{i \eta \phi}{\omega \rho_f} \B{K}^{-1}(\omega)$. By the definition of dynamic tortuosity and dynamic permeability it is clear that their principal directions coincide. In the principal direction $x_j$, $j=1,\cdots, 3$ of $\B{K}$, the JKD tortuosity is
\beqa
T^J_j(\omega)=\alpha_{\infty j}\left(  1-\frac{\eta\phi}{i\omega\alpha_{\infty j}\rho_{f} K_{0j}}\sqrt{1-i\frac{4\alpha_{\infty j}^{2}K_{0j}^{2}\rho_{f}\omega}{\eta\Lambda_j^{2}\phi^{2}}}\right),\, j=1,2,3 \label{tortuosity-JKD}
\eeqa
with the tunable geometry-dependent constant $\Lambda_j$,  
\iffalse
We note that the tortuosity-permeability relation   implies  the corresponding permeability is
\beq
K^J(\omega)= {K_0}/\left(\sqrt{1-\frac{4 i \alpha_\infty^2 K_0^2 \rho_f \omega}{\eta \Lambda^2 \phi^2}} -\frac{i \alpha_\infty K_0 \rho_f \omega}{\eta\phi}\right) , 
\label{JKD-permeability}
\eeq
\fi
the dynamic viscosity of pore fluid $\eta=\rho_f\,\nu$, the porosity $\phi$, the fluid density $\rho_f$, the static permeability $K_j$ and the infinite-frequency-tortuosity $\alpha_{\infty j}$; all of these parameters are positive real numbers. We refer to $T^J_j(\omega)$ as the JKD tortuosity function in the $j$-th direction..

The time-domain low-frequency Biot's equations have been numerically solved by many authors. However, for high-frequency Biot equations such as Biot-JKD, the time convolution term remains a challenge for numerical simulation. Masson and Pride \cite{Masson} defined a time convolution product to discretize the fractional derivative. Lu and Hanyga \cite{Lu} developed a new method to calculate the shifted fractional derivative without storing and integrating the entire velocity histories. Recent years, Chiavassa and Lombard et al. \cite{Blance, Lombard2} used an optimization procedure to approach the fractional derivative.

In this work, we will derive an equivalent system of the Biot-JKD equations without resorting to the fractional derivative technique. The advantage of this approach is that the new system of equations have the same structure as the low-frequency Biot equations but with more variables. Hence we refer to this system as the augmented Biot-JKD equations. A key step in this derivation is to utilize the Stieltjes function structure to compute from the given JKD tortuosity the coefficient of the additional terms.}

The first-order formulation of the time-domain Biot equations consists of the strain-stress relations of the poroelastic materials and the equation of motions. The solid displacement $\bu$, the pore fluid velocity relative to the solid $\bq$ and the pore pressure $p$ are the unknowns to be solved.  In terms of the solid displacement $\bu$, we define the following variables
\[
\bv:= \partial_t \bu \mbox{ (solid velocity) }, \bw:= \phi(\bU-\bu) \mbox{(fluid displacement relative to the solid) }, \bq:=\partial_t \bw, \zeta:=-\nabla\cdot \bw
\]
where $\phi$ is the porosity. Here $\bU$ is the averaged fluid velocity over a representative volume element. The spatial coordinates $(x_1,x_2,x_3)$ are chosen to be aligned with the principal directions of the static permeability tensor $\bK$ of the poroelastic material, which is know to be symmetric and positive definite. 

Let $\ep_{ij}:=\frac{1}{2}(\frac{\partial u_i}{\partial x_j}+\frac{\partial u_j}{\partial x_i})$ be the linear strain of the solid part, then the stress-strain relation  is given by \cite{biot1962mechanics-of-de}
\beq
\begin{pmatrix}
\sigma_{11}\\ \sigma_{22}\\ \sigma_{33} \\ \sigma_{23}\\ \sigma_{13}\\ \sigma_{12}\\ p
\end{pmatrix}
=\begin{pmatrix}
c_{11}^u & c_{12}^u & c_{13}^u & c_{14}^u & c_{15}^u & c_{16}^u& M\alpha_1\\
c_{12}^u & c_{22}^u & c_{23}^u & c_{24}^u & c_{25}^u & c_{26}^u & M\alpha_2\\
c_{13}^u & c_{23}^u & c_{33}^u & c_{34}^u & c_{35}^u & c_{36}^u & M\alpha_3\\
c_{14}^u & c_{24}^u & c_{34}^u & c_{44}^u & c_{45}^u & c_{46}^u & M\alpha_4\\
c_{15}^u & c_{25}^u & c_{35}^u & c_{45}^u & c_{55}^u & c_{56}^u & M\alpha_5\\
c_{16}^u & c_{26}^u & c_{36}^u & c_{46}^u & c_{56}^u & c_{66}^u & M\alpha_6\\
M\alpha_1 & M\alpha_2 & M\alpha_3 & M\alpha_4 & M\alpha_5 & M\alpha_6 & M   
\end{pmatrix}
\begin{pmatrix}
\ep_{11} \\ \ep_{22} \\ \ep_{33} \\ 2\ep_{23}\\ 2\ep_{13} \\ 2\ep_{12} \\ -\zeta
\end{pmatrix}
\eeq
where $p$ is the pore pressure, $c_{ij}^u$ are the elastic constants of the undrained frame, which are related to the elastic constants $c_{ij}$ of the drained frame by $c_{ij}^u=c_{ij}+M a_i a_j$, $i,j=1,\ldots,6$. In terms of the material bulk moduli $\kappa_s$ and $\kappa_f$ of the solid and the fluid, respectively, the fluid-solid coupling constants $a_i$ and $M$ are given by
\beqs
a_i:=\begin{cases}
1-\frac{1}{3\kappa_s}\sum_{k=1}^{3} c_{ik} & \mbox{ for } i=1,2,3,\\
-\frac{1}{3\kappa_s}\sum_{k=1}^{3} c_{ki} & \mbox{ for } i=4,5,6,
\end{cases}\\
M:=\frac{\kappa_s}{1-\overline{\kappa}/\kappa_s - \phi(1-\kappa_s/\kappa_f)} ,\\
\overline{\kappa}:=\frac{c_{11}+c_{22}+c_{33}+2c_{12}+2c_{13}+2c_{23}}{9} .
\eeqs 

The six equations of motion are as follows
\beqa
\sum_{k=1}^3 \frac{\partial \sigma_{jk}}{\partial x_k}=\rho \frac{\partial v_j}{\partial t} + \rho_f \frac{\partial q_j}{\partial t},\, t>0 , \label{test}\\
-\frac{\partial p}{\partial x_j} = \rho_f \frac{\partial v_j}{\partial t}+ \left(\frac{\rho_f}{\phi} \right) \check{\alpha_j} \star \frac{\pr q_j }{\pr t},\,t>0,\, j=1,2,3,
\label{convo}
\eeqa
where $\star$ denotes the time-convolution operator,  $\rho_f$ and $\rho_s$ are the density of the pore fluid and of the solid, respectively, $\rho:=\rho_s(1-\phi)+\phi \rho_f$ and $\check{\alpha_j}$ is the inverse Laplace transform of the dynamic tortuosity $\alpha_j(\omega)$ with $\omega$ being the frequency. Here the one-sided  Laplace transform of a function $f(t)$ is defined as
\[
\hat{f}(\omega):={\cal{L}}[f](s=-i\omega):={   \int_{0}^{\infty} f(t)e^{-st} dt}.
\]

As a special case of the Biot-JKD equations, the low frequency Biot's equation corresponds to 
\[
\hat{\alpha_j}(t)=\alpha_{\infty j} \delta(t)+\frac{\eta \phi}{K_{0j}\rho_f}H(t) ,
\] 
where $\delta(t)$ is the Dirac function and $H(t)$ the Heaviside function, 
$\eta$ the dynamic viscosity of the pore fluid, $K_{0j}$ the static permeability in the $x_j$ direction. This low-frequency tortuosity function corresponds to 
\[
\alpha_j(\omega)=\alpha_{\infty j}+\frac{\eta \phi/K_{0j} \rho_f}{-i\omega} .
\]

{   In the Biot-JKD equation, we have $\alpha_j(\omega)=T^J_j(\omega)$.}

According to Theorem 5.1 in \cite{ou2014on-reconstructi}, in the principal coordinates $\{x_j\}_{j=1}^3$ {   of the permeability tensor $\B{K}$} and for $\omega$ such that $-\frac{i}{\omega} \in \field{C}\setminus \theta_1$,  the {   JKD} dynamic tortuosity function has the following integral representation formula
\beqa
T^J_j(\omega)=a_j\left(\frac{i}{\omega}\right)+\int_0^{\theta_1} \frac{d\sigma_j(t)}{1-i\omega t},\,\,   a_j:=\frac{\eta \phi}{\rho_f K_{0j}},\, j=1,2,3,
\label{Tortuosity_IRF}
\eeqa
where $0<\theta_1<\infty$  and the positive measure $d\sigma_j$ has a Dirac measure of strength $\alpha_{\infty j}$ sitting at $t=0$; this is to take into account the asymptotic behavior of dynamic tourturosity  as frequency goes to $\infty$. This function is the analytic continuation of the usual dynamic tortuosity function  in which $\omega\ge 0$.  As a function of the new variable $s:=-i\omega$, $\omega\in \field{C}$,  the singularities of \eqref{Tortuosity_IRF} are included in the interval $(-\infty,-\frac{1}{\theta_1})$ and a simple pole sitting at $s=0$. Therefore, if we define a new function for each $j=1,2,3$
\beq
D^J_j(s):=T^J_j(\omega)-\frac{i a_j}{\omega}=\int_0^{\theta_1} \frac{d\sigma_j(t)}{1+s t} ,
\label{IRF-Ds}
\eeq
then $D^J_j(s)$ is analytic in $\field{C}\setminus (-\infty,-\frac{1}{\theta_1})$ on the s-plane. This type of functions are closely related to the well-known Stieltjes functions. The first {   approach} we propose in this paper is based on the fact \cite{gelfgren1981multipoint} that a Stieltjes function can be well approximated by its Pad\'{e} approximant whose poles are all simple. The other {   approach} proposed here for computing the pole-residue approximation of the dynamic tortuosity function is based on the result in \cite{ALPAY1994485}.

We note that  
\[
D^J_j(s)=\alpha_{\infty j}\left(  1+\frac{\eta\phi}{s\alpha_{\infty j}\rho_{f} K_{0j}}\sqrt{1+s\frac{4\alpha_{\infty j}^{2}K_{0j}^{2}\rho_{f}}{\eta\Lambda_j^{2}\phi^{2}}}\right)-\frac{a_j}{s}=:\alpha^J(s)-\frac{a_j}{s}, j=1,2,3
\]
is analytic away from the {\bf branch cut} on $[0,C_1]$ along the real axis, where $C_1:=\frac{4 \alpha_{\infty j}^2 K_{0j}^2}{\nu\phi^2 \Lambda_j^2}$. Therefore
\beq
D^J_j(s)=\int_0^{   C_1} \frac{d\sigma^J_j(t)}{1+st}\approx \alpha_\infty+ \sum_{k=1}^M \frac{r_k^j}{s-p_k^j} \mbox{ for } s\in \field{C}\setminus (-\infty,-\frac{1}{C_1}], j=1,2,3.
\label{DJ_approx}
\eeq
with $r_k^j>0$, $p_k^j<-\frac{1}{C_1}<0$, $j=1,2,3$, $k=1,\ldots,M$, that can be computed from dynamic permeability data $K_{   j}(\omega)$ evaluated at $M$ different frequencies in the frequency content of the initial waves. The special choice of $s=-i\omega$, $\omega\in \field{R}$ in \eqref{DJ_approx} provides a pole-residue approximation of $T^J_{   j}(\omega)$, $j=1,2,3$.

Applying Laplace transform to the convolution term in \eqref{convo} with JKD tortuosity, i.e. $\alpha=\alpha^J$, (see eg.  Theorem 9.2.7 in \cite{ACV_Dettman})
\beqa
{\cal{L}}[\check{\alpha^J}\star \frac{\pr q_j}{\pr t}](s)&=& \alpha^J(s)(s \hat{q_j})\\
&=&\left(D^J(s)+\frac{a}{s}\right)(s\hat{q_j})\\
&\approx& \left( \alpha_\infty+\sum_{k=1}^M \frac{r_k}{s-p_k} +\frac{a}{s}  \right) (s \hat{q_j})\\
&\approx& \alpha_\infty s \hat{q_j}+\sum_{k=1}^M \frac{r_k}{s-p_k}  (s \hat{q_j})+ a\hat{q_j}\\
&=& \alpha_\infty s \hat{q_j}+\left(a+\sum_{k=1}^M  r_k \right) \hat{q_j} +\sum_{k=1}^M  r_k p_k \frac{\hat{q_j}}{s-p_k} .
\label{Lap_trans}
\eeqa
Notice that
\beqa
s\hat{q_j}={\cal{L}}\left[ \pr_t q^j\right]+q_j(0).
\eeqa
Furthermore, for each of the terms in the sum, since all the singularities $p_k$ are restricted to the left of $s=-\frac{1}{C_1}$, the inverse Laplace transform can be performed by integrating along the imaginary axis for $t>0$, i.e., 
\[
{\cal{L}}^{-1}\left[\frac{1}{s-p_k}\right](t)=  {   \frac{1}{2\pi i} } \int_{-i \infty}^{i\infty}\frac{1}{\zeta-p_k} e^{\zeta t}d\zeta=  r_k e^{p_k t},\, t>0.
\]
This integral  is evaluated by integrating  along $[-Ri, Ri]\cup \{s=Re^{i\theta} |  \pi/2< \theta < 3\pi/2\}$ and applying the residue theorem and letting $R\goto \infty$. As a result, we have for $t>0$
\beqa
\left(\check{\alpha^J}\star \frac{\pr q_j}{\pr t}\right) (\bx,t)&:=&\int_0^t \check{\alpha^J}(\tau)\frac{\pr q_j}{\pr t}(\bx,t-\tau)d\tau\\
&\approx& \alpha_\infty \left(\frac{\pr q_j}{\pr t}\right)+ \left(a+ \sum_{k=1}^M  r_k \right) q_j
-\sum_{k=1}^M  r_k (-p_k) e^{p_k t} \star  q_j.
\eeqa

Applying a strategy similar to those in the literature \cite{carcione2001wave-fields-in-}, we define the auxiliary variables $\Theta_k$, $k=1,\ldots,M$ such that
\beq
\Theta_k^{x_j}(\bx,t):= (-p_k) e^{p_k t} \star  q_j.
\label{theta_def}
\eeq
It can be easily checked that $\Theta_k$, $k=1,\ldots,M$, satisfies the following equation:
\beq
\pr_t\Theta_k^{x_j}(\bx,t)=p_k \Theta_k^{x_j}(\bx,t)-p_k q_j(\bx,t).
\eeq

We assume $\B{q}(0)=0$ for simplicity. For an anisotropic media, each principal direction $x_j$, $j=1,2,3$, has a different tortuosity function $\alpha_j$. We label the corresponding poles and residue  as $p_k^{x_j}$ and $r_k^{x_j}$ and modify \eqref{theta_def} accordingly. Replacing the convolution terms in \eqref{convo} with the equations of $\Theta
_k^{x_j}$, we obtain the following system that has no explicit memory terms:
\begin{empheq}[left=\empheqlbrace]{align}
\sum_{k=1}^3 \frac{\partial \sigma_{jk}}{\partial x_k}& =\rho \frac{\partial v_j}{\partial t} + \rho_f \frac{\partial q_j}{\partial t},\, t>0  ,
\label{aug_b}
\\
\pr_t\Theta_k^{x_j}(\bx,t)&=p_k \Theta_k^{x_j}(\bx,t)-p_k q_j(\bx,t),\, j=1,2,3,\\
-\frac{\partial p}{\partial x_j} &= \rho_f \frac{\partial v_j}{\partial t}+ \left(\frac{\rho_f \alpha_{\infty j}}{\phi} \right) \frac{\pr q_j }{\pr t}
  + \left( \frac{\eta}{K_{0j}}+\frac{\rho_f}{\phi}\sum_{k=1}^M r_k \right) q_j \nonumber\\
 &-\left(\frac{\rho_f}{\phi}\right)\sum_{k=1}^M r_k \Theta_k^{x_j},\,t>0,\, j=1,2,3. 
 \label{aug_e}
\end{empheq}
We refer to this system as the augmented system of Biot-JKD equations in the principal directions {   of the permeability tensor $\B{K}$}. 
%%%%
\section{\label{algo}Numerical scheme for computing $r_k$ and $p_k$}
Since the function $D^J$ results from subtracting the pole of $T^J$ at $s=0$, it has a removable singularity at $s=0$ and is analytic away from its branch-cut located at $(-\infty,-1/C_1]$. Both {   a{   Approach}pproach}s presented here are based on the fact that $D^J(s)$ is a Stieltjes function. 

The problem to be solved is formulated as follows. Given the data of $D^J$ at distinct values of $s=s_1,\ldots, s_M$, construct the pole-residue approximation of $D^J$ such that 
\beq
D^J(s)\approx D^J_{est}(s):=\alpha_\infty+\sum_{k=1}^M \frac{r_k}{s-p_k} \mbox{ for } s\in[s_1,s_M] \mbox { and } r_k>0, \, p_k<0, \ \forall k=1,\dots ,M.
\label{the_point}
\eeq
\subsection{Rational function approximation and partial fraction decomposition}
This approximation takes into account the asymptotic behavior $\lim_{s\goto \infty}=\alpha_\infty$ and hence can be considered as an improved version of the reconstruction {   Approach} for tortuosity in \cite{ou2014on-reconstructi}, which does not interpolate at infinity. In this paper, we also take into account the asymptotic behaviors of $D(s)$. Note that 
\beq
\lim_{\omega\goto 0^+}D(s=-i\omega)=\alpha_\infty+ 2\left(\frac{\alpha_\infty}{\Lambda}\right)^2\frac{K_0}{\phi},\hspace{0.5in}
\lim_{\omega\goto \infty}D(s=-i\omega)= \alpha_\infty
\label{asymp_D}
\eeq
By a theorem in \cite{gelfgren1981multipoint}, we know that the poles in the Pad\'{e} approximant of $D^J(s)$ have to be contained in $(-\infty,-1/C_1]$ and are all simple with positive weight (residue), this implies that the constant term in the denominator in the Pad\'{e} approximant can be normalized to one. According to the aforementioned theorem, if $(s,D^J(s))$ is an interpolation point with $Im(s)\ne 0$, then $(\overline{s}, D^J(\overline{s}) )$ must also be an interpolation point, where $\overline{\cdot}$ represents the complex conjugate. From the integral representation formula (IRF), we know that $D^J(\overline{s_k})=\overline{D^J(s_k)}$.

Hence, the following approximation problem is considered:
Given $M$ data points $D^J(s_k=-i\omega_k)\in \field{C}$, $k=1,\ldots,M$, find $\bx:=(a_0,\cdots,a_{M-1}, b_1,\cdots,b_{M})^t$ such that 
$$
(S)
\begin{cases}
\D{
D^J(s_k)-\alpha_\infty = \frac{a_0+a_1 s_k+\cdots+a_{M-1}s_k^{M-1}} {1+b_1 s_k+\cdots+b_{M}s_k^{M}} },k=1,\ldots,M,\\
\D{
\overline{D^J(s_k)}-\alpha_\infty = \frac{a_0+a_1 \overline{s_k}+\cdots+a_{M-1}\overline{s_k}^{M-1}} {1+b_1 \overline{s_k}+\cdots+b_{M}\overline{s_k}^{M}},\, k=1,\ldots,M ,}
\end{cases}
$$
where $\omega_k$, $k=1,\ldots,M$, are distinct positive numbers. However, with a closer look, this system of equations is equivalent to the one by enforcing the condition $\bx\in \field{R}^{2M}$ to the first half of $(S)$. To be more specific, we define
\beqa
A &:=&
\begin{pmatrix}
1 & s_1 &s_1^2 &\cdots & s_1^{M-1} & -D^J(s_1) s_1 & - D^J(s_1) s_1^2 &\cdots & -D^J(s_1) s_1^{M}\\
1 & s_2 &s_2^2 &\cdots & s_2^{M-1} & -D^J(s_2) s_2 & - D^J(s_2) s_2^2 &\cdots & -D^J(s_2) s_2^{M}\\
\vdots &\vdots& \vdots& &\vdots& \vdots&\vdots & &\vdots\\
1 & s_M &s_M^2 &\cdots & s_M^{M-1} & -D^J(s_M) s_M & - D^J(s_M) s_M^2 &\cdots & -D^J(s_M) s_M^{M}
\end{pmatrix} \in \field{C}^{M\times 2M} , \nonumber\\ 
\bd&:=&(D^J(s_1){   -\alpha_\infty},\,   D^J(s_2) {   -\alpha_\infty},\ \cdots\  D^J(s_{M}){   -\alpha_\infty})^t \in \field{C}^{M},  \nonumber\\
\bx&:=&(a_0, \cdots, a_{M-1}, b_1,\cdots, b_M)^t \in \field{R}^{M}.
\eeqa
Then the system to be solved is
\beq
\begin{pmatrix}
Re(A)\\Im(A)
\end{pmatrix}
\bx=\begin{pmatrix} Re(\bd)\\Im(\bd) \end{pmatrix}, 
\label{Algo_1}
\eeq
where $Re()$ and $Im()$ denote the real part and the imaginary part, respectively. After solving for $\bx$, the poles and residues are then obtained by the partial fraction decomposition of the Pad\'{e} approximant, i.e.
\beq
\frac{a_0+a_1 s+\cdots+a_M s^{M-1}}{1+b_1s+\cdots+b_M s^M}=\sum_{j=1}^{M} \frac{r_j}{s-p_j}.
\eeq

%%%
\subsection{Two-sided residue interpolation in the Stieltjes class}
The second {   approach} is based on the following theorem that can be considered as a special case of what is proved in \cite{ALPAY1994485}. The advantage of this method is that it explicitly identifies the poles $p_k$, $k=1,\ldots,M$ as the generalized eigenvalues of matrices constructed from the data. We note that the interpolation problem below also appears in the recent paper \cite{ALI}, where the main focus is model reduction.

Let $\field{C}^+:=\{z\in \field{C}: Im(z)>0\}$. Given $M$ interpolation data $(z_i,u_i,v_i)\in \field{C}^+\times \field{C}^{p\times q}\times \field{C}^{p\times q}$, we seek a $p\times p$ matrix valued function $F(z)$ of the form
\beq
F(z)=\int_0^\infty \frac{d\mu(t)}{t-z}, \mbox{ where $\mu$ is a positive $p\times p$ matrix -valued measure}
\label{F1}
\eeq
 such that 
 \beq
 F(z_i)u_i=v_i, i=1, \ldots , M.
 \label{F2}
 \eeq
 \begin{theorem}
 \label{two-sided}
 If there exists a solution $F(z)$ described as above, then the Hermitian matrices $S_1$ and $S_2$ defined via
 \beq
 (S_1)_{ij}=\frac{u_i^*v_j - v_i^*u_j}{z_j-\overline{z_i}},\, (S_2)_{ij}:=\frac{z_j u_i^* v_j-\overline{z_i}v_i^*u_j}{z_j-\overline{z_i}},\, i,j=1,\ldots,M,
 \eeq 
 are positive semidefinite. Conversely, if $S_1$ is positive definite and $S_2$ is positive semidefinite,  then 
 \beq
 F(z):=-C_+(zS_1-S_1 A -C_+^*C_-)^{-1}C_+^* =C_+(S_2-zS_1)^{-1}C_+^*
 \eeq
 is a solution to the interpolation problem. Here
 \[
 C_-:= \begin{pmatrix} u_1 & \cdots & u_M \end{pmatrix},\, C_+:= \begin{pmatrix} v_1 & \cdots & v_M \end{pmatrix},\, A:={\rm diag}(z_i I_q)_{i=1}^M,
 \]
 and $I_q$ is the identity matrix of dimension $q$.
 \end{theorem}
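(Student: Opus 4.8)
The plan is to drive both implications from a single computational core: the resolvent difference-quotient identities for a function of the form \eqref{F1}, together with the purely algebraic relation $S_2=S_1A+C_+^*C_-$ linking the two Pick matrices. The necessity direction is then a direct substitution, while the converse is a realization-style construction whose only real content is checking that the produced $F$ lands in the Stieltjes class.

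First I would prove necessity. The decisive structural fact is that a positive, hence self-adjoint, measure gives $F(z)^*=F(\overline z)$, so from \eqref{F2} we get both $v_j=F(z_j)u_j$ and $v_i^*=u_i^*F(\overline{z_i})$. Substituting these into the definitions of $S_1$ and $S_2$ and using the elementary identities
\[
\frac{F(z_j)-F(\overline{z_i})}{z_j-\overline{z_i}}=\int_0^\infty\frac{d\mu(t)}{(t-z_j)(t-\overline{z_i})},\qquad \frac{z_jF(z_j)-\overline{z_i}F(\overline{z_i})}{z_j-\overline{z_i}}=\int_0^\infty\frac{t\,d\mu(t)}{(t-z_j)(t-\overline{z_i})},
\]
I obtain the Gram representations $(S_1)_{ij}=\int_0^\infty(\frac{u_i}{t-z_i})^*d\mu(t)\frac{u_j}{t-z_j}$ and $(S_2)_{ij}=\int_0^\infty t(\frac{u_i}{t-z_i})^*d\mu(t)\frac{u_j}{t-z_j}$. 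For a block vector $\xi=(\xi_1,\dots,\xi_M)$ put $g(t):=\sum_j\frac{u_j\xi_j}{t-z_j}$; then $\xi^*S_1\xi=\int_0^\infty g(t)^*d\mu(t)g(t)\ge 0$ and $\xi^*S_2\xi=\int_0^\infty t\,g(t)^*d\mu(t)g(t)\ge 0$. The nonnegativity of $S_2$ is exactly where the support of $\mu$ in $[0,\infty)$, i.e. $t\ge 0$, is used; positivity of the measure does the rest.

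For the converse I would work entirely with the data matrices. A one-line check on entries verifies $S_2=S_1A+C_+^*C_-$, and substituting this into $zS_1-S_1A-C_+^*C_-=zS_1-S_2$ shows the two displayed formulas for $F$ agree wherever the inverse exists. Writing $e_i$ for the block unit column with $I_q$ in slot $i$, so that $C_-e_i=u_i$, $C_+e_i=v_i$ and $Ae_i=z_ie_i$, the same identity yields $(S_2-z_iS_1)e_i=C_+^*u_i$; hence $F(z_i)u_i=C_+(S_2-z_iS_1)^{-1}C_+^*u_i=C_+(S_2-z_iS_1)^{-1}(S_2-z_iS_1)e_i=C_+e_i=v_i$, which is \eqref{F2}. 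Invertibility of $S_2-z_iS_1$ is guaranteed because $S_1\succ 0$ makes the pencil congruent to $B-zI$ with $B:=S_1^{-1/2}S_2S_1^{-1/2}$ Hermitian, so its generalized eigenvalues are all real and no $z_i\in\field{C}^+$ can be one. To recover the representation \eqref{F1} I would diagonalize $B=U\Lambda U^*$ and set $W:=C_+S_1^{-1/2}U$ with columns $w_\ell$, giving
\[
F(z)=C_+S_1^{-1/2}(B-zI)^{-1}S_1^{-1/2}C_+^*=W(\Lambda-zI)^{-1}W^*=\sum_\ell\frac{w_\ell w_\ell^*}{\lambda_\ell-z}.
\]
Each residue $w_\ell w_\ell^*$ is positive semidefinite, and $S_2\succeq 0$ forces $B\succeq 0$, so every eigenvalue $\lambda_\ell\ge 0$; thus $\mu:=\sum_\ell w_\ell w_\ell^*\,\delta_{\lambda_\ell}$ is a positive $p\times p$ matrix measure on $[0,\infty)$ and $F(z)=\int_0^\infty\frac{d\mu(t)}{t-z}$, as required.

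The step I expect to be the main obstacle is this last one in the converse: certifying that the algebraically constructed $F$ genuinely belongs to the Stieltjes class. Everything hinges on using both hypotheses in their correct roles — $S_1\succ 0$ is what makes the pencil $S_2-zS_1$ regular with real, hence off-$\field{C}^+$, spectrum (so $F$ is well defined and analytic off the real axis and the interpolation nodes are admissible), while $S_2\succeq 0$ is precisely what pins the atoms $\lambda_\ell$ to $[0,\infty)$ rather than merely to $\field{R}$. By contrast the interpolation identity and the equivalence of the two formulas are immediate once $S_2=S_1A+C_+^*C_-$ is in hand, and the necessity direction is routine.
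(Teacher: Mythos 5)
Your proof is correct and follows essentially the same route as the paper's: Gram-matrix representations of $S_1$ and $S_2$ against $d\mu$ and $t\,d\mu$ for necessity, and for the converse the identity $S_2=S_1A+C_+^*C_-$, congruence of the pencil $S_2-zS_1$ to the Hermitian positive semidefinite matrix $S_1^{-1/2}S_2S_1^{-1/2}-zI$, and its eigendecomposition to exhibit the atomic Stieltjes measure. The only cosmetic difference is that you verify $S_2=S_1A+C_+^*C_-$ by a direct entrywise computation, whereas the paper deduces it from uniqueness of solutions to the Sylvester equations $A^*S_k-S_kA=\cdots$; both are valid.
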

\begin{proof} 
Suppose \eqref{F1} and \eqref{F2} are true. Then we have 
\[
u_i^* v_j -v_i^* u_j=u_i^*(F(z_j)-F(z_i)^*)u_j=(z_j-\overline{z_i}) u_i^*\left(\int_0^\infty \frac{d\mu(t)}{(t-z_j)(t-\overline{z_i})}\right)u_j.
\]
Thus 
\[
S_1=\int_0^\infty \begin{pmatrix} \frac{u_i^*}{t-\overline{z_1}}\\ \vdots \\ \frac{u_M^*}{t-\overline{z_M}}\end{pmatrix} d\mu(t) 
\begin{pmatrix} \frac{u_1}{t-z_1} & \cdots & \frac{u_M}{t-z_M} \end{pmatrix} \ge 0,
\]
\[
S_2=\int_0^\infty \begin{pmatrix} \frac{u_i^*}{t-\overline{z_1}}\\ \vdots \\ \frac{u_M^*}{t-\overline{z_M}}\end{pmatrix} \, t d\mu(t) 
\begin{pmatrix} \frac{u_1}{t-z_1} & \cdots & \frac{u_M}{t-z_M} \end{pmatrix} \ge 0 .
\]
Conversely, suppose $S_1>0$ and $S_2 \ge 0$. Notice that 
\begin{empheq}[left=\empheqlbrace]{align}
&A^*S_1-S_1 A = C_+^* C_--C_-^*C_+ , \label{S1a}\\
&A^*S_2-S_2 A = A^*C_+^* C_--C_-^*C_+A .\label{S2a}
\end{empheq}
These equations uniquely determine $S_1$ and $S_2$ as the spectra of $A$ and $A^*$ do not overlap. Observe that if $S_1$ satisfies (\ref{S1a}), then $S_2:=S_1A+C_+^* C_-$ is the solution of (\ref{S2a}). Therefore, we have
\beq
S_2=S_1A+C_+^* C_- .
\eeq
Note that $S_2-zS_1=S_1^{\frac{1}{2}} (S_1^{-\frac{1}{2}} S_2 S_1^{-\frac{1}{2}}-z) S_1^{\frac{1}{2}}$. Since $S_1^{-\frac{1}{2}} S_2 S_1^{-\frac{1}{2}}$ has eigen values in $[0,\infty )$, $(S_1^{-\frac{1}{2}} S_2 S_1^{-\frac{1}{2}}-z)$ is invertible for $z\notin [0,\infty)$.  Let $(X,D)$ be the eigen decomposition such that 
\[
S_1^{-\frac{1}{2}} S_2 S_1^{-\frac{1}{2}}=XDX^* \mbox{ with } X=\begin{pmatrix} \bx_1 & \cdots & \bx_{qM}
\end{pmatrix},\, D={\rm diag}(d_j)_{j=1}^{qM}.
\]
Then we have for  $z\notin [0,\infty)$
\[
F(z)=\sum_{j=1}^{qM}\left( \frac{1}{d_j-z} \right)C_+ S_1^{-\frac{1}{2}} \bx_j \bx_j^* S_1^{-\frac{1}{2}} C_+^*,
\]
and thus $F(z)$ has the required form with $d\mu$ being a atomic measure supported on $d_1,\ldots,d_{qM}$.

Furthermore, letting $\be_1,\ldots,\be_M$ be the standard basis vectors of $\field{R}^M$, we have for $i=1,\ldots,M$,
\[
(z_i S_1-S_1 A-C_+^*C_-)(\be_i \otimes I_q)=S_1(z_i-I-A)(\be_i\otimes I_q)-C_+^*C_-(\be_i\otimes I_q)=0-C^*_+u_i=-C^*_+u_i .
\]
Thus
\[ 
F(z_i)u_i=-C_+(z_i S_1-S_1 A-C_+^*C_-)^{-1}C_+^*u_i=-C_+(-\be_i\otimes I_q)=v_i.
\]
\end{proof}
To apply this theorem to our problem, we first note that if we identify $z$ in Theorem \ref{two-sided} with $-\frac{1}{s}$, then the IRF for $D^J_j(s)$ in \eqref{DJ_approx}, denoted by $D^J$ for simplicity, can be written as 
\[
D^J(s)=(-z) \int_0^{\Theta_1}\frac{d\sigma^J}{t-z},
\]
and 
\beq
D^J(s)-\alpha_\infty=(-z)\left(\int_0^{\Theta_1}   \frac{d\sigma^J(t)}{t-z} -\frac{\alpha_\infty}{-z}   \right)= (-z)\left(\int_0^{\Theta_1}   \frac{d\sigma^J(t)}{t-z} -\int_0^{\Theta_1}\frac{\alpha_\infty \sigma(t)}{t-z}   \right) ,
\label{DJm}
\eeq
where $\sigma(t)$ is a Dirac measure at $t=0$. Since $\sigma^J$ has a Dirac measure of strength $\alpha_\infty$, the function inside the parentheses in \eqref{DJm} is a Stieltjes function, which we denote by $F_{new}(z)$, i.e.
\[
D^J(s)-\alpha_\infty=(-z)F_{new}(z)
\]  
%In order to have a correct asymptotic value of $D(s)$ for $s=-i\infty$, we note the fact that $d\mu(t)$ in the IRF \eqref{F1} has a positive Dirac measure at $t=0$ with strength $\alpha_\infty$, the new function $F(s)-(\frac{\alpha_\infty}{-z})$ is again a Stieltjes function and hence assumes the same type of IRF with a positive measure $d\mu(t)-\delta(t=0)$. 

What we would like to harvest is the pole-residue approximation of $D(s)-\alpha_\infty$. To avoid truncation error, we rewrite all the formulas in Theorem \ref{two-sided} in terms of variable $s=-\frac{1}{z}$ as follows.

\beqa
s_i=-\frac{1}{z_i},\,
u_i=\frac{1}{s_i},\\
v_i=D(s_i)-\alpha_\infty,\, i=1\ldots M,\\
(S_1)_{ij}=\frac{-s_j D(s_j)+s_i^* D^*(s_i)}{s_i^*-s_j},\\
(S_2)_{ij}=\frac{-D(s_j)+D^*(s_i)}{s_j-s_i^*}.
\eeqa
Consequently, we have the following representation for $D(s)$
\beq
D^J(s)\equiv \alpha_\infty+{  (\frac{1}{s})} F_{new}(-\frac{1}{s})=\alpha_\infty+\sum_{j=1}^{qM}\left( \frac{   1}{sd_j+1} \right)C_+ S_1^{-\frac{1}{2}} \bx_j \bx_j^* S_1^{-\frac{1}{2}} C_+^*.
\eeq
With the generalized eigenvalues $[\B{V},\B{L}]:=eig(S_1,S_2)$, where $\B{V}$ is the matrix of generalized  vectors and $\B{L}$ the diagonal matrix of generalized eigenvalues such that 
\beq
S_1 \B{V}=S_2 \B{V} \B{L},
\label{Algo_2}
\eeq 
and taking into account of the simultaneous diagonalization property
\beqa
\B{V}^* \B{S}_1 \B{V}=\B{L}\\
\B{V}^* \B{S}_2 \B{V}=\B{I}
\eeqa
we have
\beq
D^J(s)=\alpha_\infty+\sum_{k=1}^{N} \frac{C_+ \B{V}(:,k)\B{V}(:,k)^* C_+^*}{s+{L}(k,k)}.
\label{two-sided}
\eeq

The poles $\vartheta_k$ and residues $r_k$ are given by
\begin{eqnarray}
% \nonumber to remove numbering (before each equation)
  p_k &=& - L(k,k) \\
  r_k &=&C_+ \B{V}(:,k)\B{V}(:,k)^* C_+^*
  \end{eqnarray}

%The asymptotic behavior \eqref{asymp_D} implies that one of the generalized eigenvalues should be zero.
%
\section{Numerical Examples}
In this section, we apply both {   approach}s in Section \ref{algo} to the examples of cancellous bone (S1) studied in \cite{hosokawa2006ultrasonic-puls}, \cite{fellah2013transient-ultra} and the epoxy-glass mixture (S2 and S3) and the sandstone (S4 and S5) examples studied in \cite{blanc2014wave-simulation}. From prior results, it is known that wider range the frequency is, the more ill-conditioned the corresponding matrices will be. We focus on the test case in \cite{blanc2014wave-simulation}, which apply the fractional derivate approach to deal with the memory term.  In this case, time profile of the source term, denoted by $g(t)$ is a Ricker signal of central frequency $f_0=10^5\,s^{-1}$ and time-shift $t_0=1/f_0$, i.e.
\[
g(t)=\begin{cases}
(2\pi^2f_0^2(t-t_0)^2-1)exp(-\pi^2 f_0^2(t-t_0)^2), & \mbox{ if } 0\le t\le 2t_0,\\
0, & \mbox{otherwise}.
\end{cases}
\]
{\  See Figure \ref{ricker}. }

\begin{figure}[t]
\begin{center}
\includegraphics[scale=0.3]{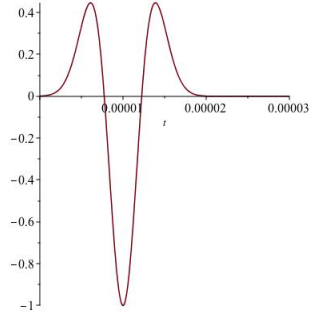}
\caption{Ricker wavelet $g(t)$}
\label{ricker}
\end{center}
\end{figure}
\noindent The spectrum content of $g(t)$ is visualized by its Fourier transform $\mathcal{F}\{g\}(\omega)$, {  see Figure \ref{real_spec} and \ref{imag_spec}}. Since the real part and the imaginary part is symmetric and anti-symmetric with respect to $\omega=0$, respectively, we only plot the $\omega\ge 0$ part of the graphs. 
\begin{figure}[b]
\centering
        \begin{subfigure}[h]{0.49\textwidth}
        \centering
                \includegraphics[width=\textwidth]{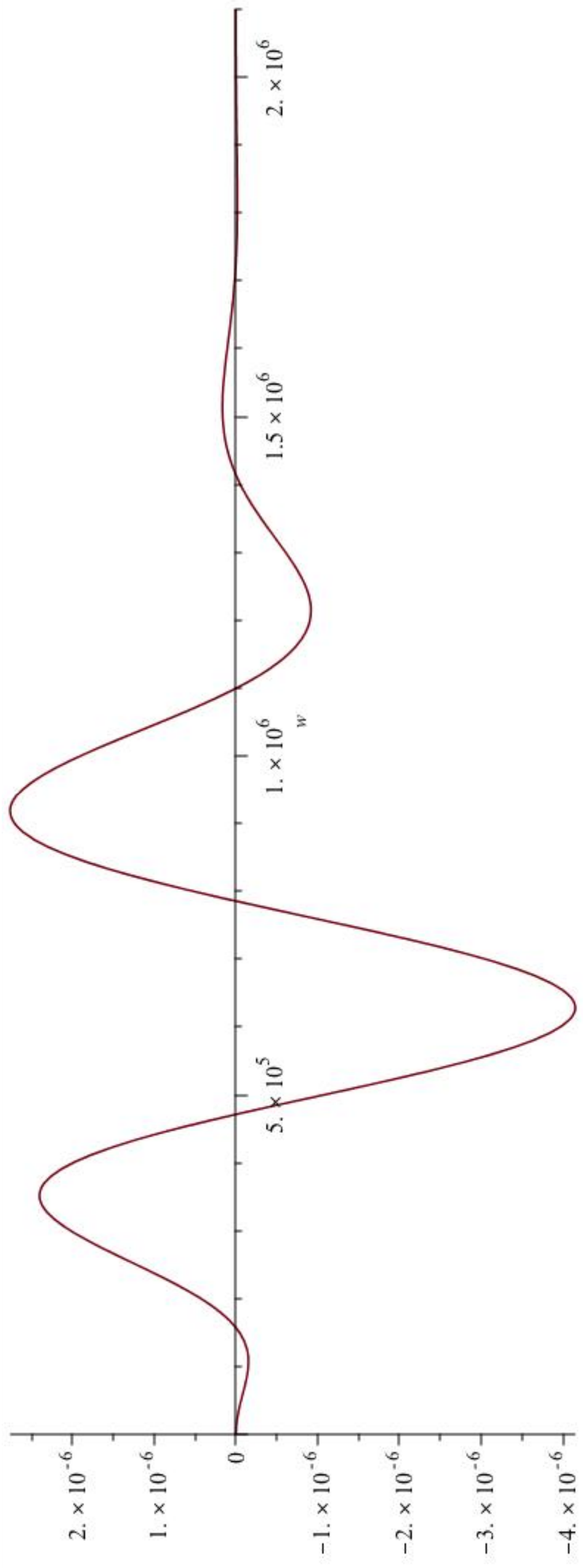}
                \caption{Real part of $\mathcal{F}\{g\}(\omega)$}
                \label{real_spec}
        \end{subfigure}%\end{figure}
       \begin{subfigure}[h]{0.49\textwidth}
        \centering
                \includegraphics[width=\textwidth]{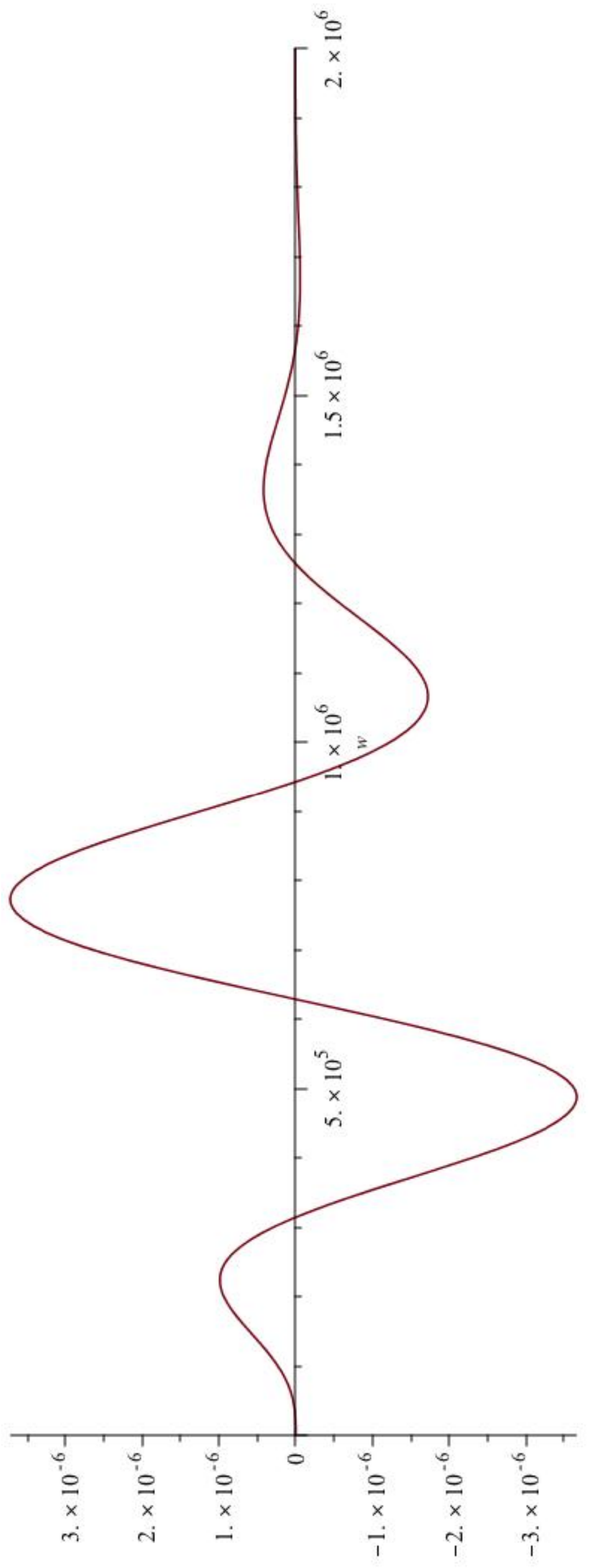}
                \caption{Imag. part of $\mathcal{F}\{g\}(\omega)$}
                \label{imag_spec}
        \end{subfigure}%\end{figure}
\caption{Spectral content of $g(t)$}
\end{figure}
Based on Figure \ref{real_spec} and Figure \ref{imag_spec}, we choose the frequency range in our numerical simulation to be from $10^{-3}$ Hz to $2\times 10^{6}$ Hz. 

\begin{table}[htp]
\caption{Biot-JKD parameters}
\begin{center}
\begin{tabular}{|c|c|c|c|c|c|c|}
\hline
&&S1 & S2 & S3 &S4 & S5 \\\hline
$\rho_f (Kg\cdot m^{-3})$& \mbox{pore fluid density}&1000 &1040 &1040 &1040&1040\\
\hline
$\phi$(dimensionless)& \mbox{porosity}&0.8 &0.2&0.2&0.2&0.2\\
\hline
$\alpha_\infty$(dimensionless)&\mbox{infinite-frequency tortuosity}&1.1&3.6&2.0&2.0&3.6\\
\hline
$K_0 (m^2)$&\mbox{static permeability} &3e-8 & 1e-13& 6e-13&6e-13 &1e-13\\
\hline
$\nu (m^2\cdot s^{-1})$&\mbox{kinematic viscosity of pore fluid}& 1e-3/$\rho_f$ & 1e-3/$\rho_f$& 1e-3/$\rho_f$& 1e-3/$\rho_f$& 1e-3/$\rho_f$\\
\hline
$\Lambda (m)$&\mbox{structure constant}&2.454e-5&3.790e-6&6.930e-6&2.190e-7&1.20e-7\\
\hline
\end{tabular}
\end{center}
\label{parameters}
\end{table}%
We consider first the equal spaced sample points. Similar to what was reported in \cite{ou2014on-reconstructi}, the relative error peaked near low frequency. This is due to fact that in general, the function $D(s=-i\omega)$ varies the most near the lower end of $\omega$. This observation leads to the log-distributed grid points, which in general performs better in terms of maximum relative errors but with more ill-conditioned matrices.  For both the equally spaced and the log-spaced grids point, ill-conditioned matrices are involved. The ill-conditioning nature of the matrices $A$ in {  Approach} 1 and $S_1$, $S_2$ in {  Approach} 2, together with the fact there is no obvious preconditioner available for these matrices, we resort to the multiprecision package \emph{Advanpix}  \cite{mct2015} for directly solving \eqref{Algo_1} and the subsequent partial fraction decomposition involved in {  Approach} 1 and for solving the generalized eigenvalue problem \eqref{Algo_2}. These real-valued poles and residues are then converted to double precision before we evaluate the relative errors
\[
rel\_{err}(s):=\frac{|D^J(s)-D^J_{est}(s)|}{|D^J(s)|}, 
\]
where the pole-residue approximation function $D^J_{est}$ is defined in \eqref{the_point}. {  We set the number of significant digits in \emph{Advanpix} to be $90$, which is much higher than the $15$ decimal digits a 64-bit double-precision floating point format can represent.}
%%%%%%%%%%%%%
\begin{figure}[thb]
\centering
        \begin{subfigure}[thb]{0.95\textwidth}
        \centering
                \includegraphics[scale=0.4]{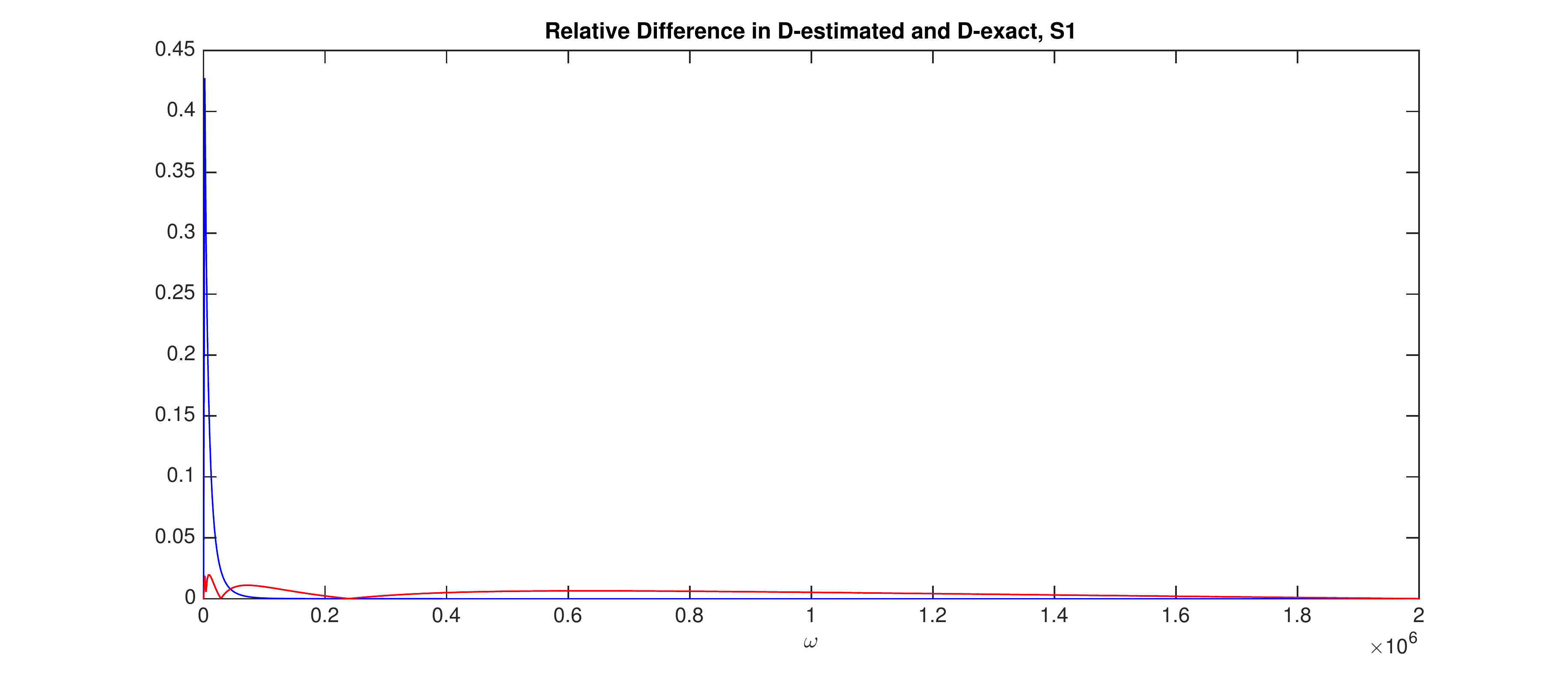}
                \caption{S1}
                \label{S1}
        \end{subfigure}%\end{figure}
\\        
       \begin{subfigure}[thb]{0.95\textwidth}
        \centering
                \includegraphics[scale=0.4]{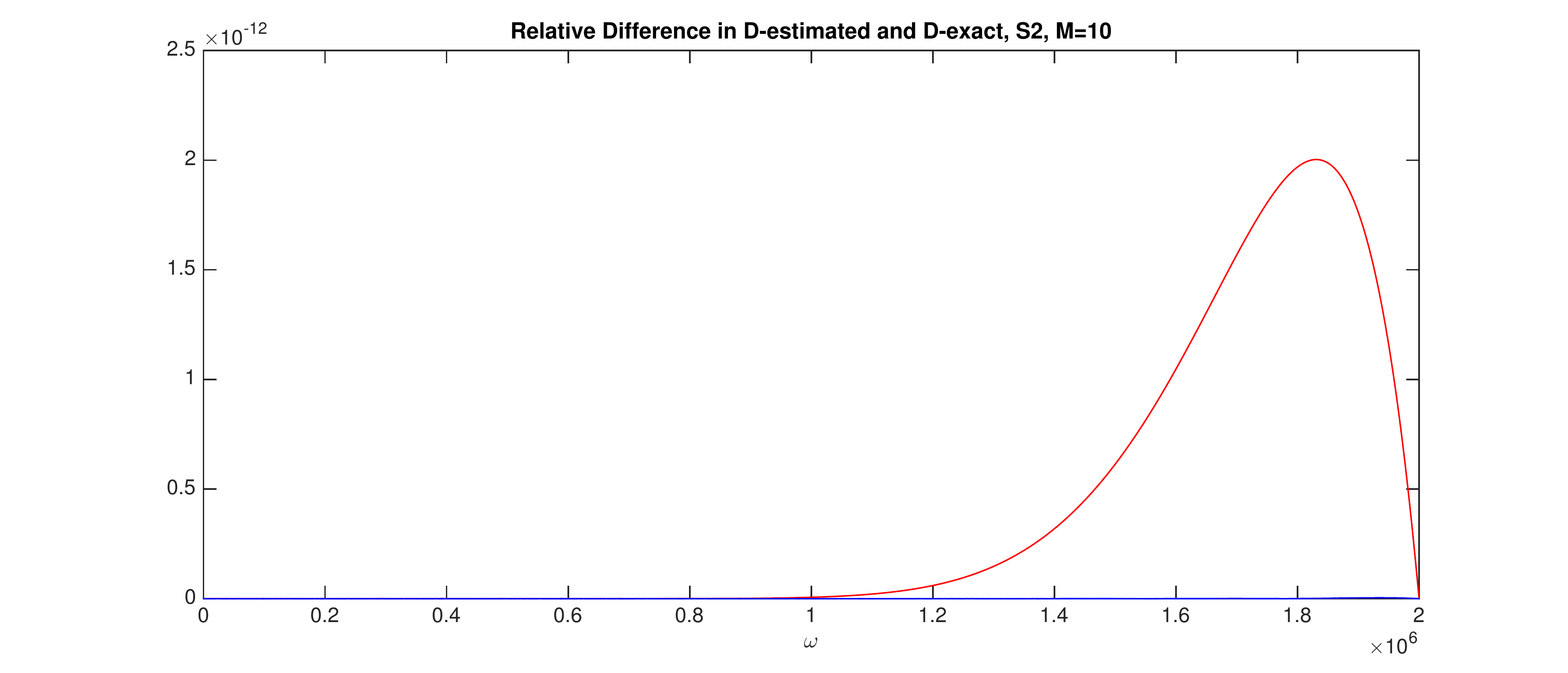}
                \caption{S2}
                \label{S2}
        \end{subfigure}%\end{figure}
  \\  
       \begin{subfigure}[thb]{0.95\textwidth}
        \centering
                \includegraphics[scale=0.4]{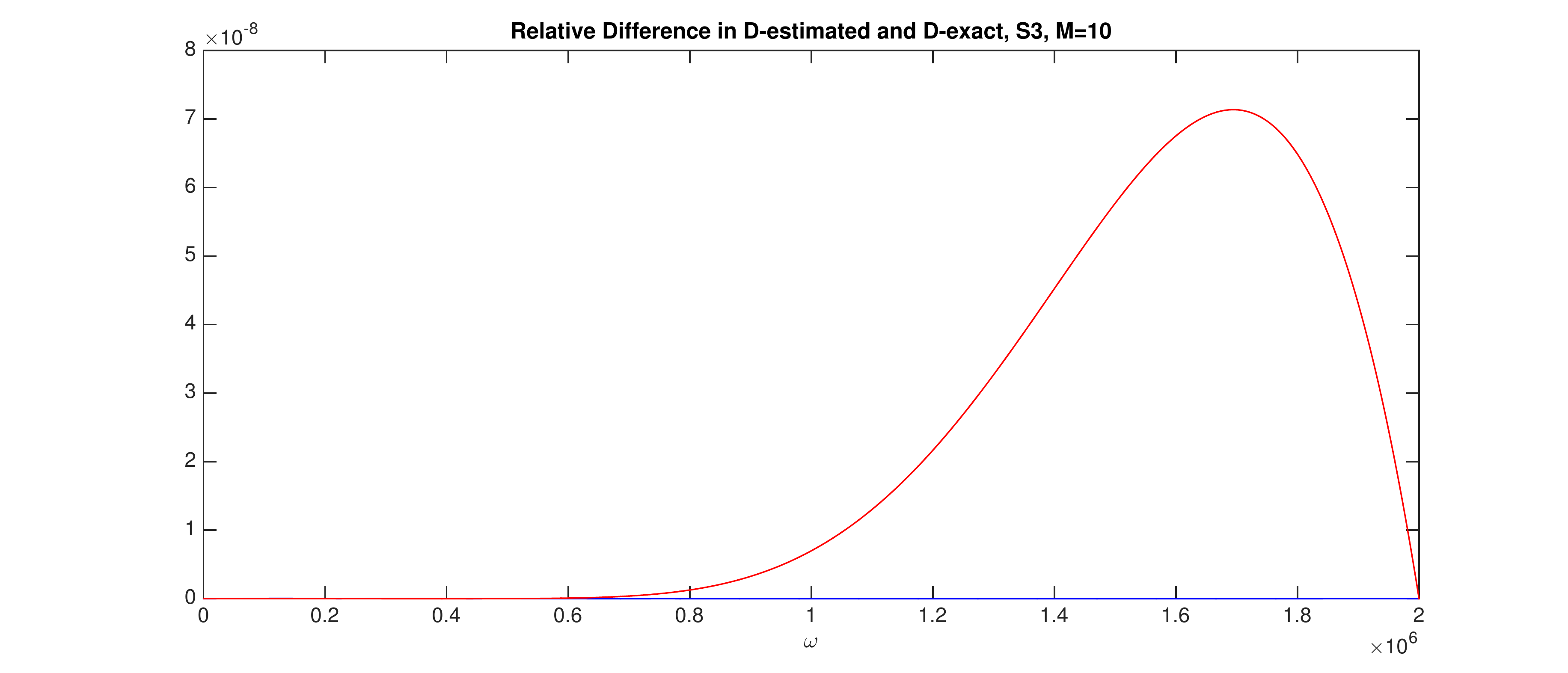}
                \caption{S3}
                \label{S3}
        \end{subfigure}%\end{figure}          
 \end{figure}

\begin{figure}[thb] \ContinuedFloat
\centering  
       \begin{subfigure}[thb]{0.95\textwidth}
        \centering
                \includegraphics[scale=0.4]{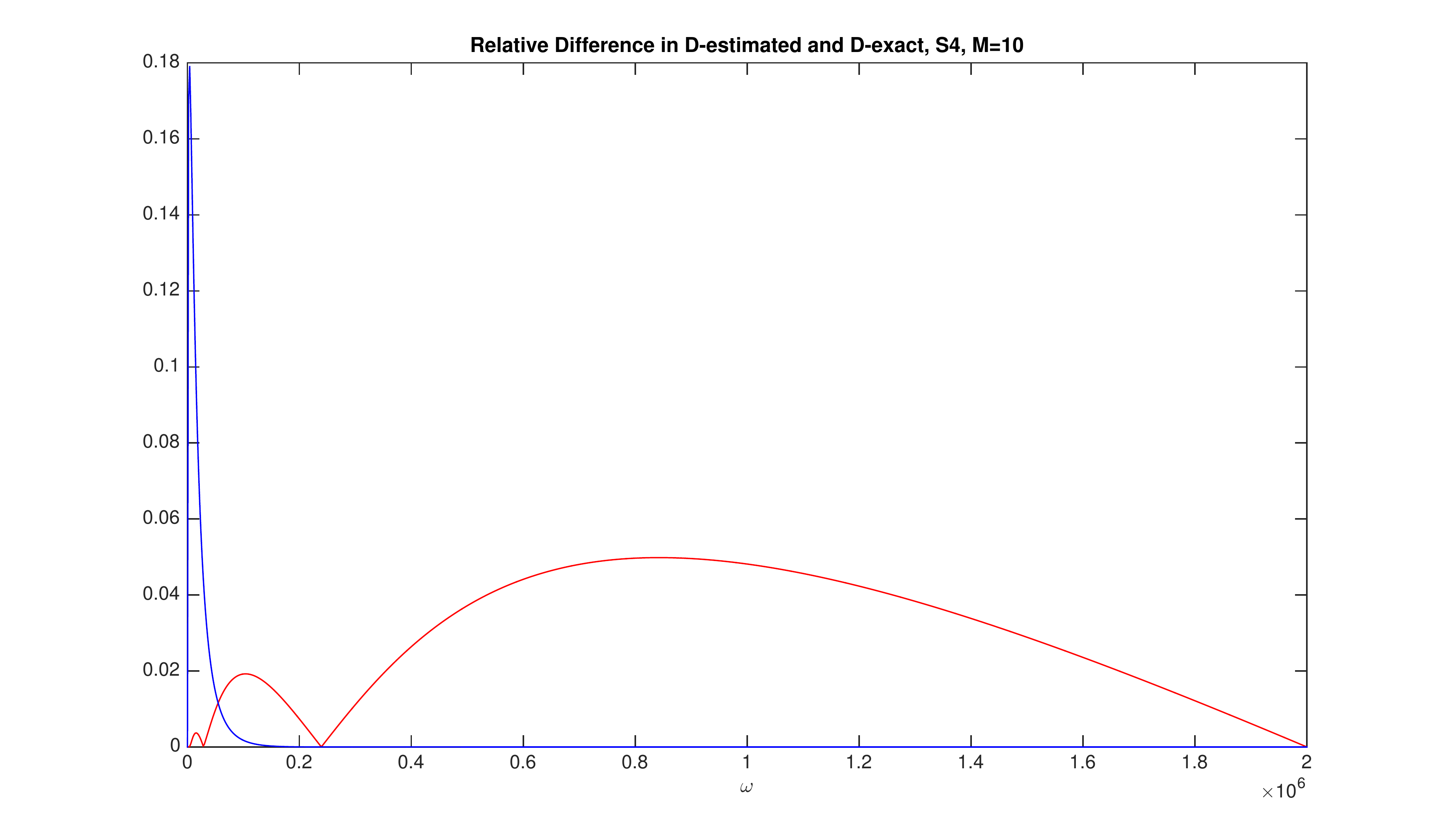}
                \caption{S4}
                \label{S4}
        \end{subfigure}%\end{figure}
\\        
       \begin{subfigure}[thb]{0.95\textwidth}
        \centering
                \includegraphics[scale=0.4]{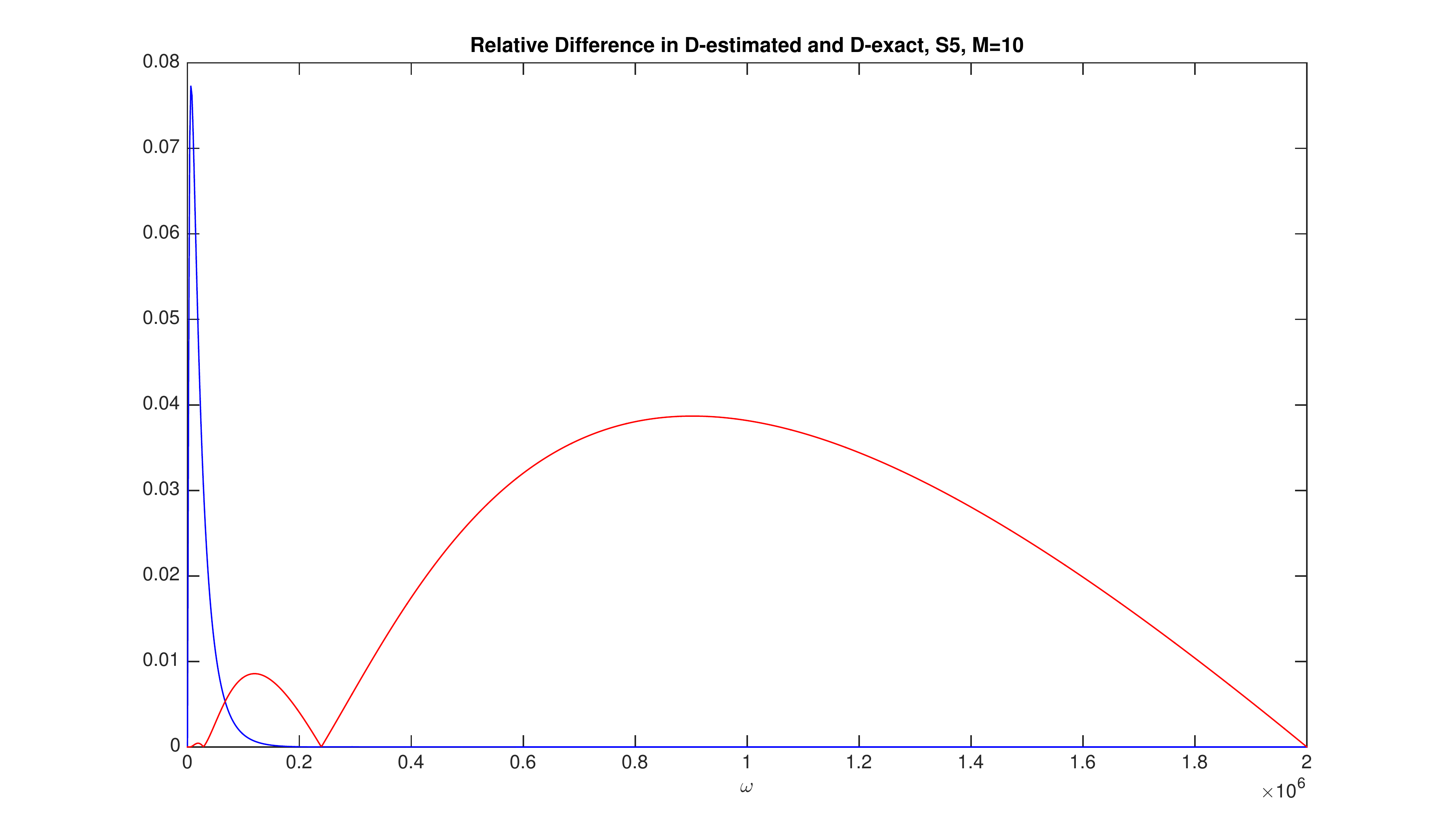}
                \caption{S5}
                \label{S5}
        \end{subfigure}%\end{figure}               
      \caption{Comparison of relative errors with $M=10$ for S1 to S5. Blue: Equally spaced grids, Red: log-distributed grids}
\end{figure}
%%%%%
The relative error $rel\_err$ with $M=10$ for all the 5 media listed in Table \ref{parameters} is plotted in Figure \ref{S1} to Figure \ref{S5}. The results by using equally space grids are in color blue while those by using log-distributed ones are in color red. 

Among all the 5 media listed in Table \ref{parameters}, the cancellous bone S1 and the sandstones S4 and S5 are the most difficult one to approximate in the sense that it requires the largest $M$ for achieving the same level of accuracy as for other media. {  The dynamic tortuosity functions of S1, S4 and S5 have large variation near low frequency and hence can be approximated much better when log-distributed grids are applied in the approximations. See Figure \ref{M14_equal} and Figure \ref{M14_log}.}

In {   Tables \ref{conditioning_S1} to \ref{conditioning_S5}}, we list the condition numbers for both of the equally-spaced grid points and the log-distributed one.  As can be seen, the condition numbers for matrices involved in {   Approach} 1 with log-distributed grid points worsen very rapidly with the increase of $M$ and the rescaling of volumes of $A$ is not effective when compared with the equally spaced case. {   In Figures \ref{scatter_M14_S1} to  \ref{scatter_M14_S5} }, where the poles and residues for {   $M=14$} computed with different combinations of methods are plotted in log-log scale, we see that {   Approach} 1 and 2 indeed give numerically identical results {   for all these 5 test media when the significant digits in the calculations are set much higher than the $\log_{10}$-scale of the condition numbers involved.} The calculation is carried out by using {$140$} significant digits and it takes about 5 seconds with a single processor MacBook Pro.

%%%%%%%%%
\begin{table}[h]
\caption{$\log_{10}$-Condition numbers of the matrices for material S1}
\begin{center}
\begin{tabular}{|c|c|c|c|c|}
\hline
&M=8& M=8 & M=14& M=14\\ \hline
&Equally spaced & log-spaced & Equally spaced & log-spaced\\ \hline
$A$& 48.9750&49.2328&86.7853&87.4283\\ \hline
$B$& 10.0354&31.4021& 15.2592&57.8602\\ \hline
$S_1$&13.3237&4.3014&23.7936&5.8767\\ \hline
$S_2$&9.9319&10.8471&30.3962&11.9522\\ \hline
\end{tabular}
\end{center}
\label{conditioning_S1}
\end{table}%
%%%
%%%%%%%%%
\begin{table}[h]
\caption{$\log_{10}$-Condition numbers of the matrices for material S2}
\begin{center}
\begin{tabular}{|c|c|c|c|c|}
\hline
&M=8& M=8 & M=14& M=14\\ \hline
&Equally spaced & log-spaced & Equally spaced & log-spaced\\ \hline
$A$&50.3092 &51.1536&88.1203&90.5197\\ \hline
$B$& 20.1795&64.8793&31.3151&120.2955 \\ \hline
$S_1$&15.1483&55.9887&29.1822&110.3533\\ \hline
$S_2$&15.1657&55.9766&29.1998&110.3456 \\ \hline
\end{tabular}
\end{center}
\label{conditioning_S2}
\end{table}%
%%%
%%%%%%%%%
\begin{table}[h]
\caption{$\log_{10}$-Condition numbers of the matrices for material S3}
\begin{center}
\begin{tabular}{|c|c|c|c|c|}
\hline
&M=8& M=8 & M=14& M=14\\ \hline
&Equally spaced & log-spaced & Equally spaced & log-spaced\\ \hline
$A$&49.9052&50.8287& 87.7160& 90.1957   \\ \hline
$B$&16.5867& 59.7475& 24.6405& 110.6680\\ \hline
$S_1$&12.3521&49.2816& 24.0606& 97.5660   \\ \hline
$S_2$&12.5097&49.2655& 24.2122& 97.5514   \\ \hline
\end{tabular}
\end{center}
\label{conditioning_S3}
\end{table}%
%%%
%%%%%%%%%
\begin{table}[h]
\caption{$\log_{10}$-Condition numbers of the matrices for material S4}
\begin{center}
\begin{tabular}{|c|c|c|c|c|}
\hline
&M=8& M=8 & M=14& M=14\\ \hline
&Equally spaced & log-spaced & Equally spaced & log-spaced\\ \hline
$A$&50.8209& 51.4408& 88.6312& 89.5011 \\ \hline
$B$&11.8964& 41.4130& 16.7579& 76.4563 \\ \hline
$S_1$&11.5699&18.8133& 22.0503&38.7220  \\ \hline
$S_2$& 14.6234& 18.7833&25.0888& 38.6911\\ \hline
\end{tabular}
\end{center}
\label{conditioning_S4}
\end{table}%
%%%
%%%%%%%%%
\begin{table}[h]
\caption{$\log_{10}$-Condition numbers of the matrices for material S5}
\begin{center}
\begin{tabular}{|c|c|c|c|c|}
\hline
&M=8& M=8 & M=14& M=14\\ \hline
&Equally spaced & log-spaced & Equally spaced & log-spaced\\ \hline
$A$&51.0851&51.8269&88.8954& 89.9913 \\ \hline
$B$& 12.2279&43.8165& 17.1587& 80.9135  \\ \hline
$S_1$& 11.3409&23.0730& 21.8547& 47.0251 \\ \hline
$S_2$& 13.8633&  23.0477& 24.3312& 46.9971    \\ \hline
\end{tabular}
\end{center}
\label{conditioning_S5}
\end{table}%
%%%
\iffalse
%%%%%%%%%
\begin{table}[h]
\caption{Condition numbers of the matrices for material S5}
\begin{center}
\begin{tabular}{|c|c|c|c|c|}
\hline
&M=8& M=8 & M=14& M=14\\ \hline
&Equally spaced & log-spaced & Equally spaced & log-spaced\\ \hline
$A$& 1.216442e+51&6.712975e+51&7.860543e+88&9.601685e+89\\ \hline
$B$& 1.690184e+12&6.553785e+43&1.441229e+17&8.193134e+80\\ \hline
$S_1$&2.192279e+11&1.183083e+23&7.157170e+21&1.059471e+47\\ \hline
$S_2$&7.299177e+13&1.116016e+23&2.143866e+24&9.934587e+46\\ \hline
\end{tabular}
\end{center}
\label{conditioning}
\end{table}%
%%%%
\fi
%%%%%%%%%%%%%%%%%%%%%%% New Figures%%%%%%%%%%%%%%%%%%%%%%%%
%%%
\begin{figure}[h]
\begin{center}
\includegraphics[scale=0.5]{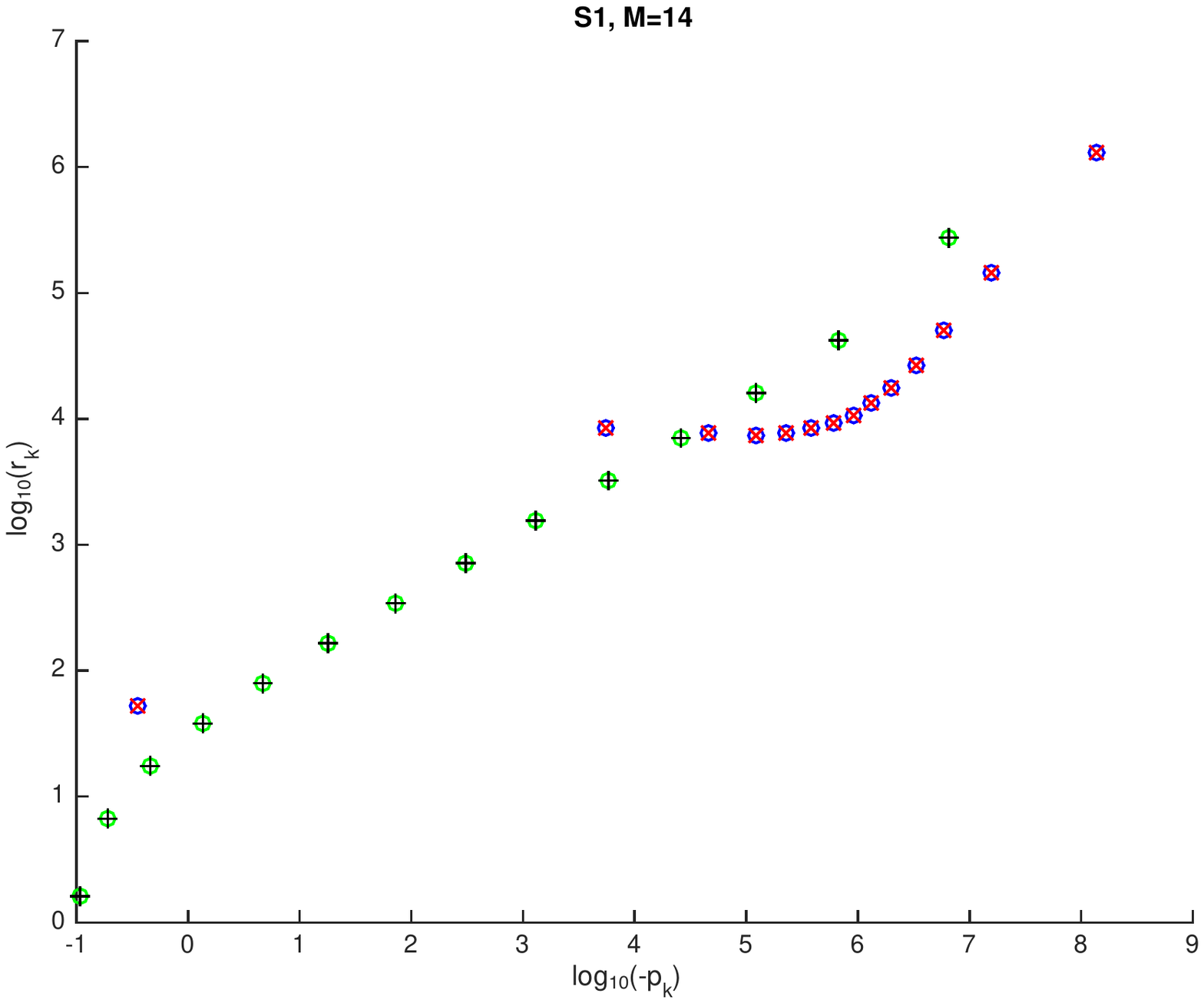}
\caption{($log_{10}(-p_k)$, $log_{10}(r_k)$), $k=1,..,14$. Blue circle: Approach 1 with Equally-spaced grids, Red x: Approach 2 with Equally-spaced grid, Green circle: Approach 1 with Log-spaced grids, Black +: Approach 2 with Log-spaced grids}
\label{scatter_M14_S1}
\end{center}
\end{figure}
%%%%%
%%%
\begin{figure}[h]
\begin{center}
\includegraphics[scale=0.5]{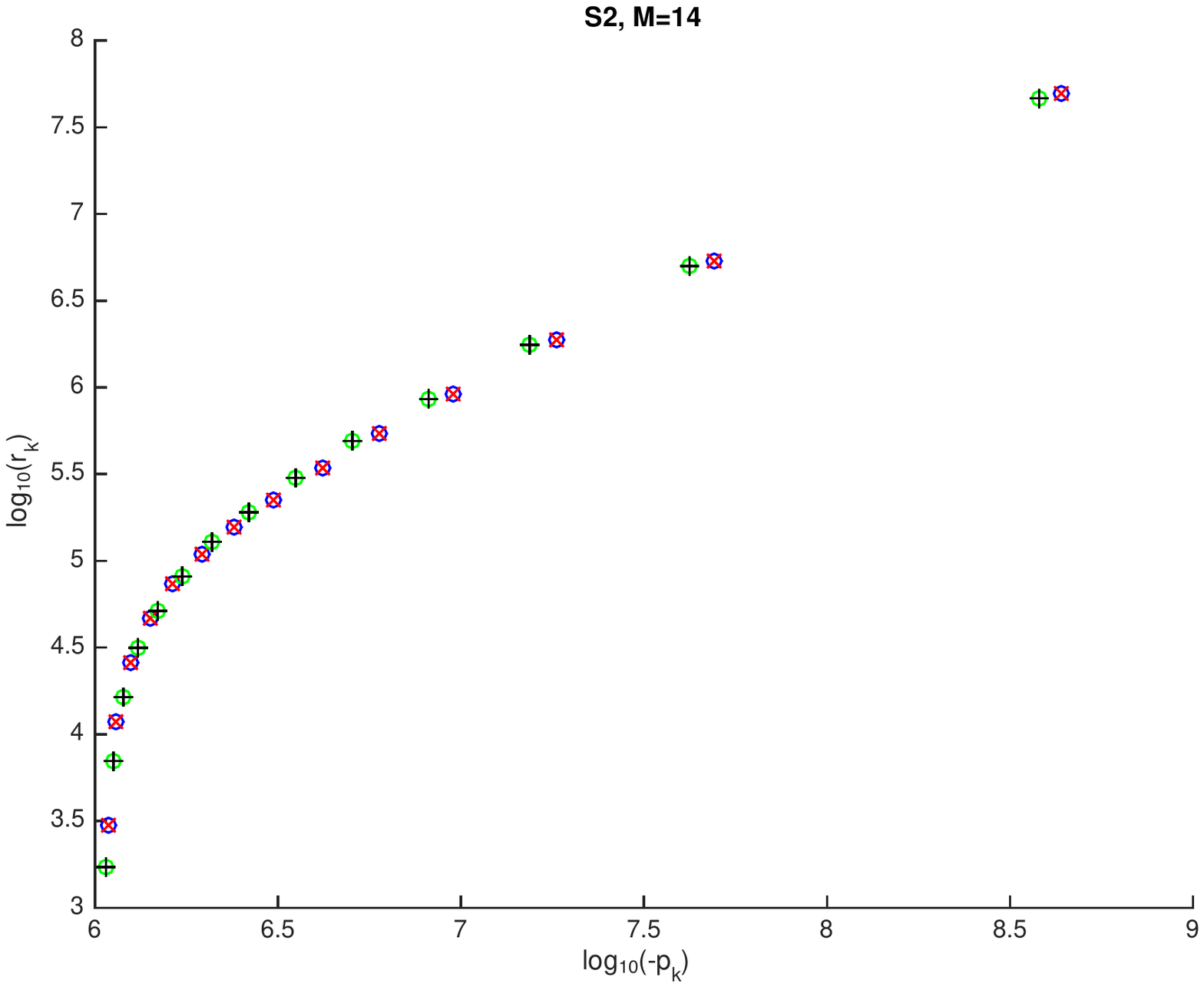}
\caption{($log_{10}(-p_k)$, $log_{10}(r_k)$), $k=1,..,14$. Blue circle: Approach 1 with Equally-spaced grids, Red x: Approach 2 with Equally-spaced grid, Green circle: Approach 1 with Log-spaced grids, Black +: Approach 2 with Log-spaced grids}
\label{scatter_M14_S2}
\end{center}
\end{figure}
%%%%%
%%%
\begin{figure}[h]
\begin{center}
\includegraphics[scale=0.5]{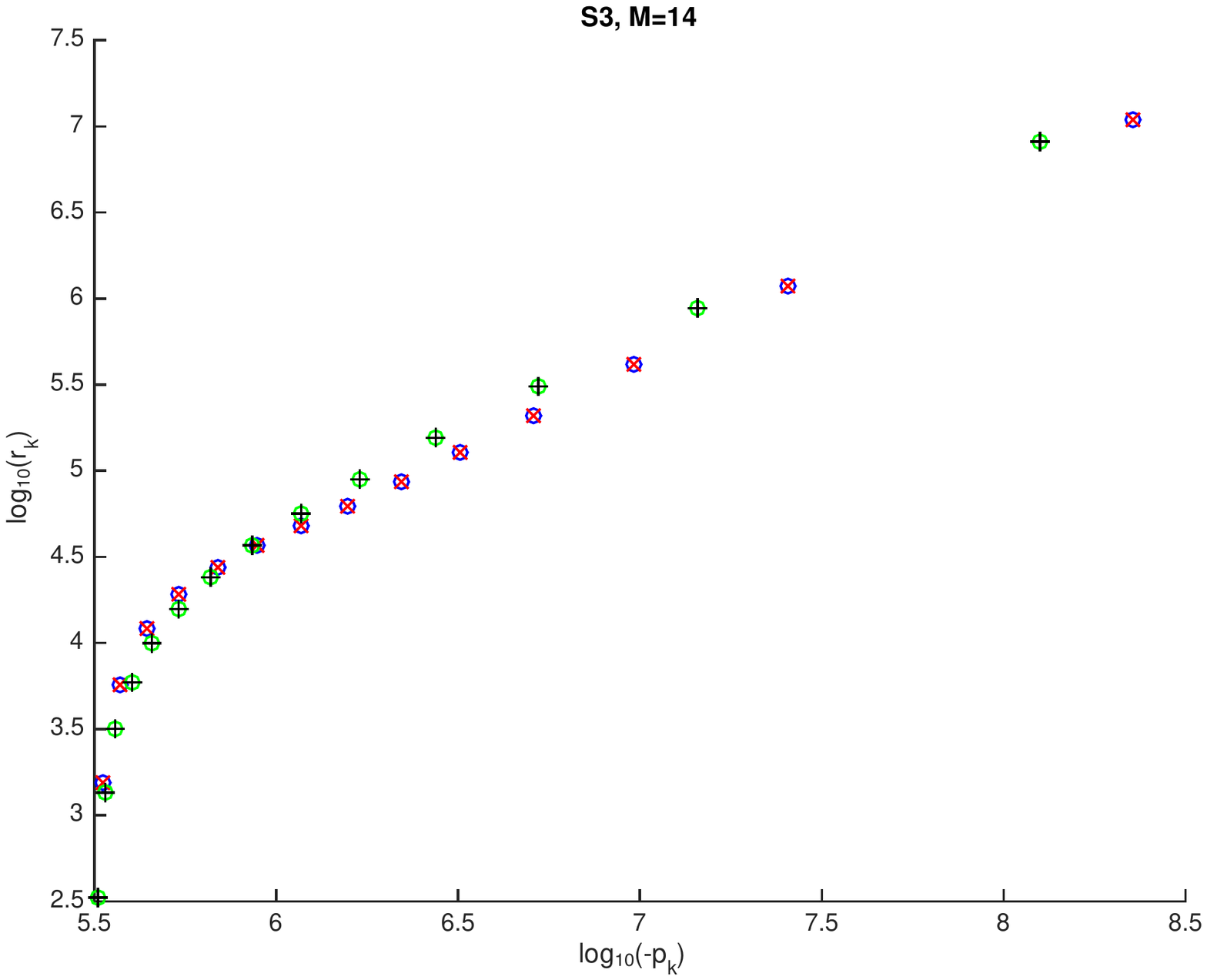}
\caption{($log_{10}(-p_k)$, $log_{10}(r_k)$), $k=1,..,14$. Blue circle: Approach 1 with Equally-spaced grids, Red x: Approach 2 with Equally-spaced grid, Green circle: Approach 1 with Log-spaced grids, Black +: Approach 2 with Log-spaced grids}
\label{scatter_M14_S3}
\end{center}
\end{figure}
%%%%%
%%%
\begin{figure}[h]
\begin{center}
\includegraphics[scale=0.5]{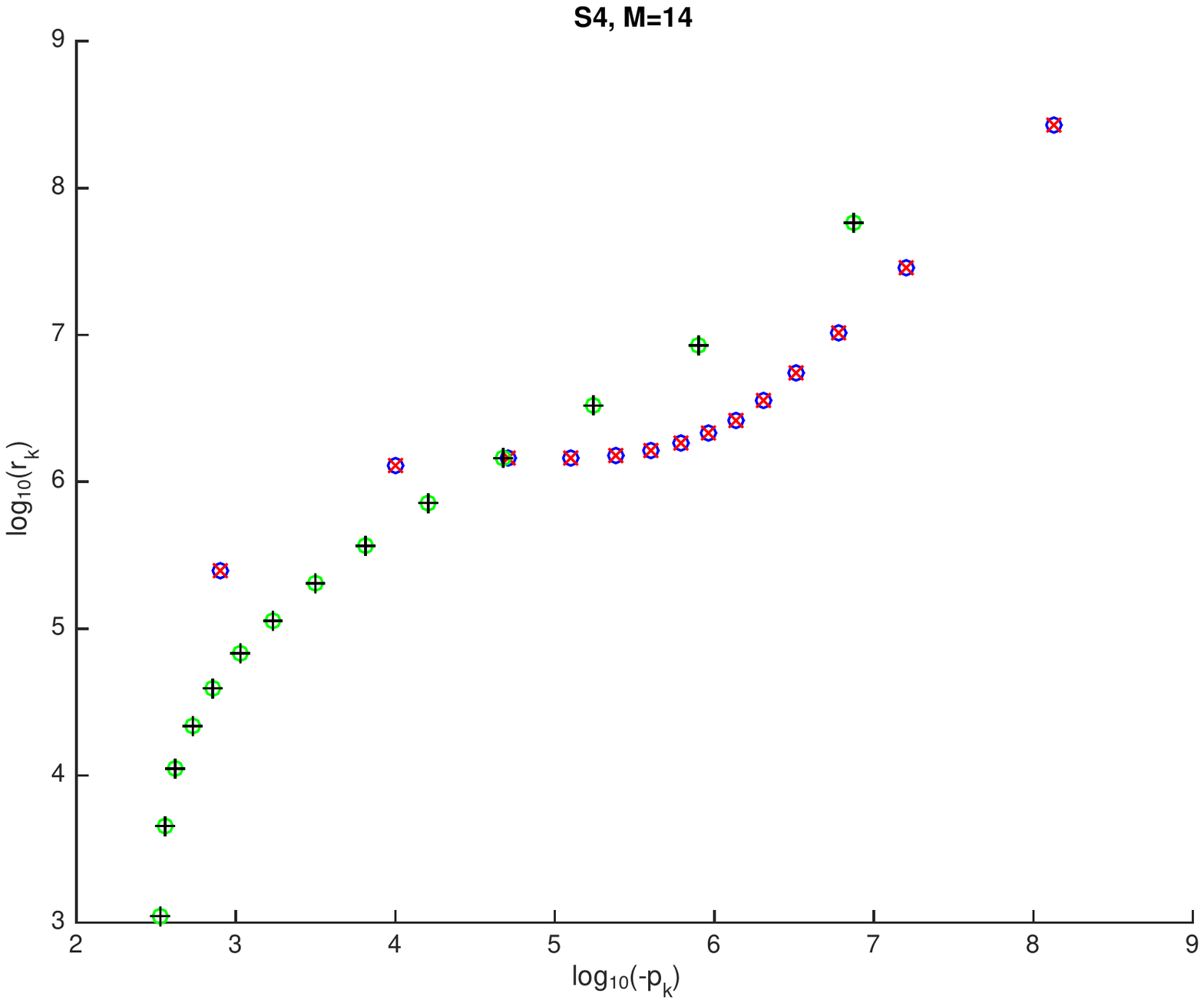}
\caption{($log_{10}(-p_k)$, $log_{10}(r_k)$), $k=1,..,14$. Blue circle: Approach 1 with Equally-spaced grids, Red x: Approach 2 with Equally-spaced grid, Green circle: Approach 1 with Log-spaced grids, Black +: Approach 2 with Log-spaced grids}
\label{scatter_M14_S4}
\end{center}
\end{figure}
%%%%%
%%%
\begin{figure}[h]
\begin{center}
\includegraphics[scale=0.5]{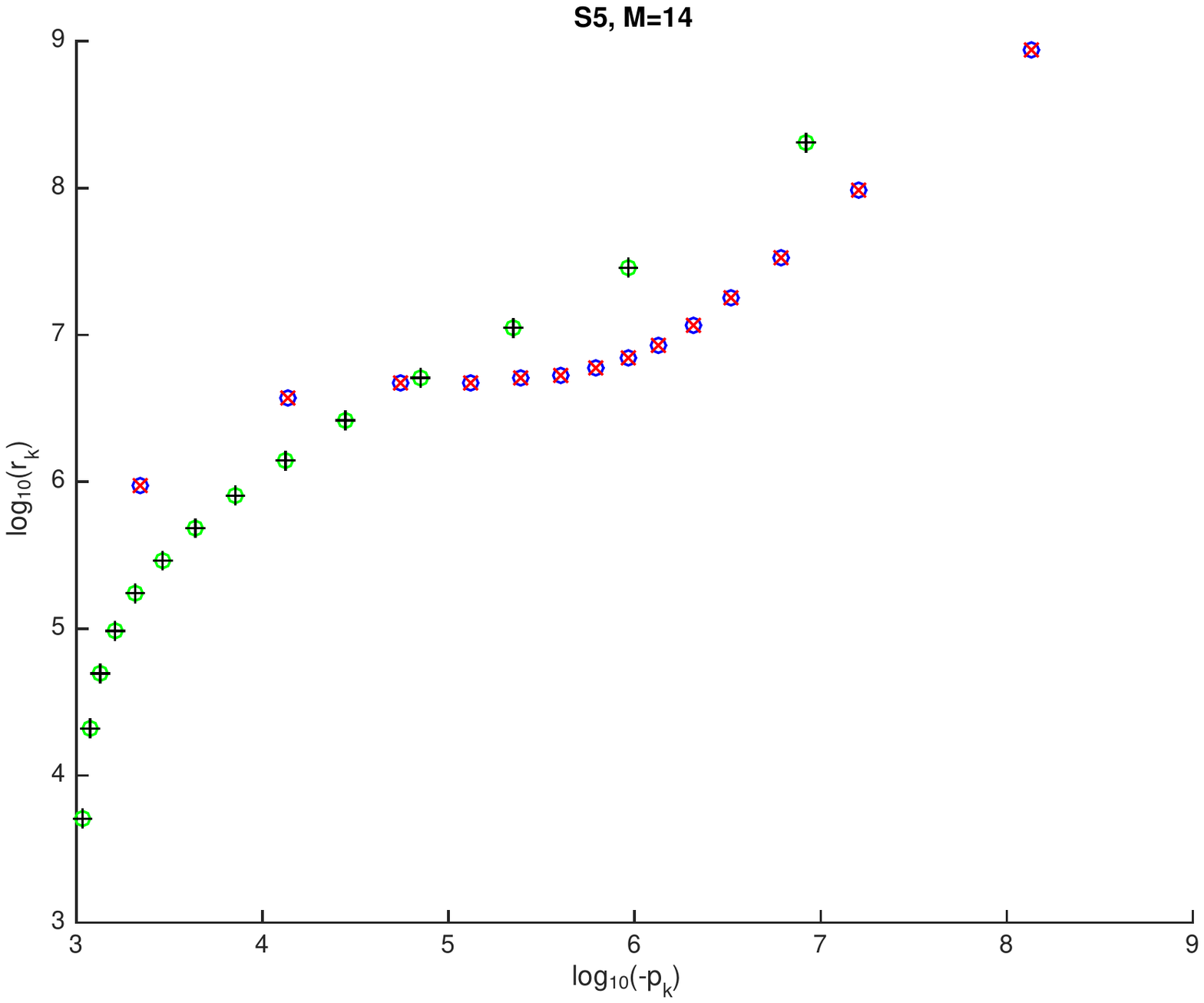}
\caption{($log_{10}(-p_k)$, $log_{10}(r_k)$), $k=1,..,14$. Blue circle: Approach 1 with Equally-spaced grids, Red x: Approach 2 with Equally-spaced grid, Green circle: Approach 1 with Log-spaced grids, Black +: Approach 2 with Log-spaced grids}
\label{scatter_M14_S5}
\end{center}
\end{figure}
%%%%%
%
%%%
\begin{figure}[h]
\begin{center}
\includegraphics[scale=0.5]{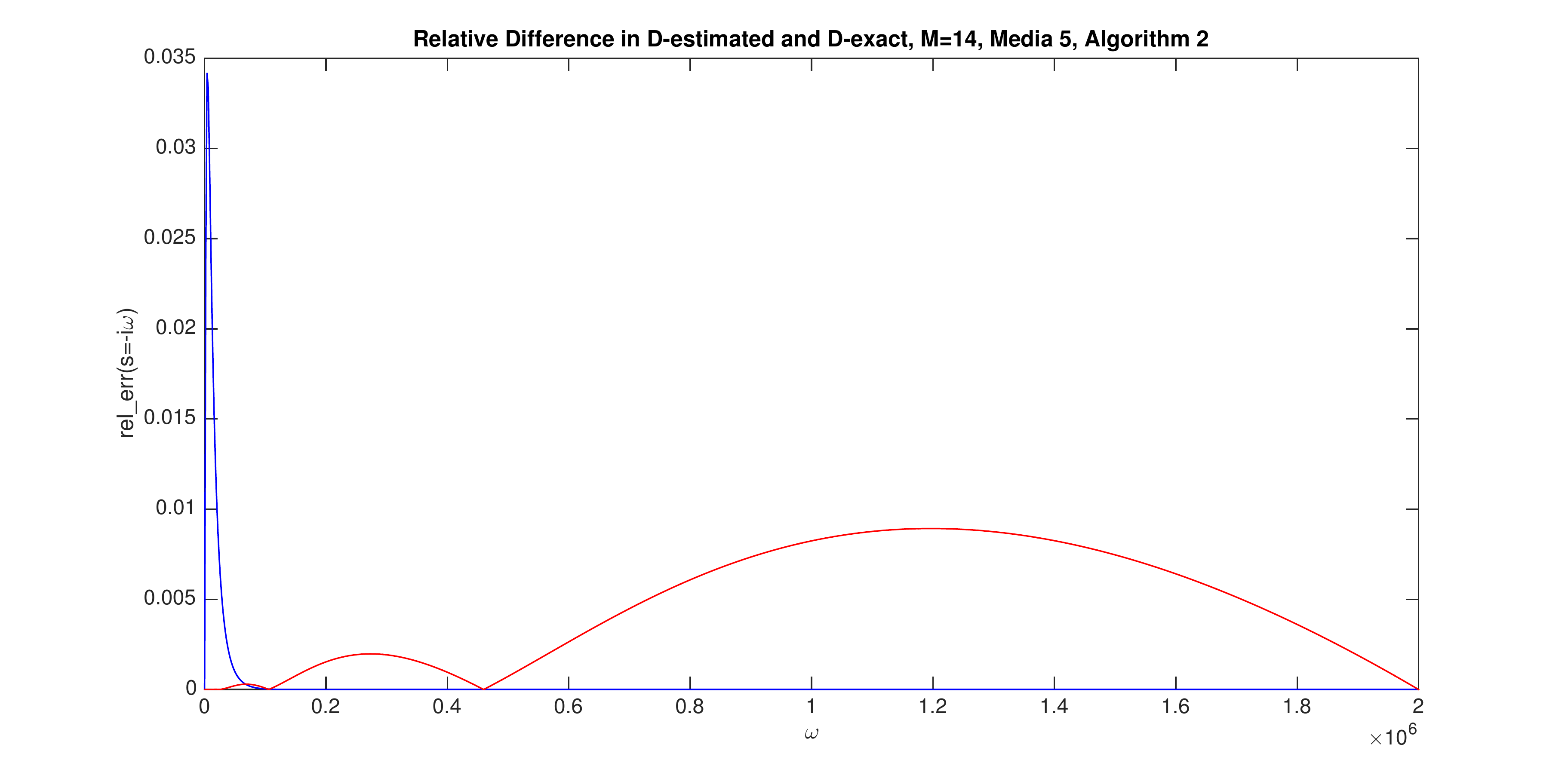}
\caption{$rel\_err(s=-i\omega)$, Blue: equally spaced grids, Red:  log-distributed grids}
\label{M14}
\end{center}
\end{figure}
%%%%%
%%%
In Figure \ref{M14}, the relative error $rel\_err$ for approximations by using equally spaced grid and by log-distributed grids are presented. As can be see from Figure \ref{M14_equal}, the peak of error near the lower end of the frequency range is due to the fact that the function being approximated needs more grid points there to resolve the variation. This is achieved by using the log-distributed grids. In Figure \ref{M14_equal} and Figure \ref{M14_log}, we plot $D^J$ and its pole-residue approximation $D^J_{est}$ to visualize the performance. Figure \ref{M14_equal} corresponds to the equally spaced grids while Figure \ref{M14_log} to the log-distributed one. In both figures, these two functions are almost indiscernible except the imaginary parts in Figure \ref{M14_equal} near the lower end of frequency where $rel\_{err}$ peaks; both the colors black (imaginary part of $D^J$) and green (imaginary part of $D^J_{est}$) can be seen there.
%%%%%
\begin{figure}[b]
\centering
        \begin{subfigure}[h]{0.95\textwidth}
        \centering
                \includegraphics[width=\textwidth]{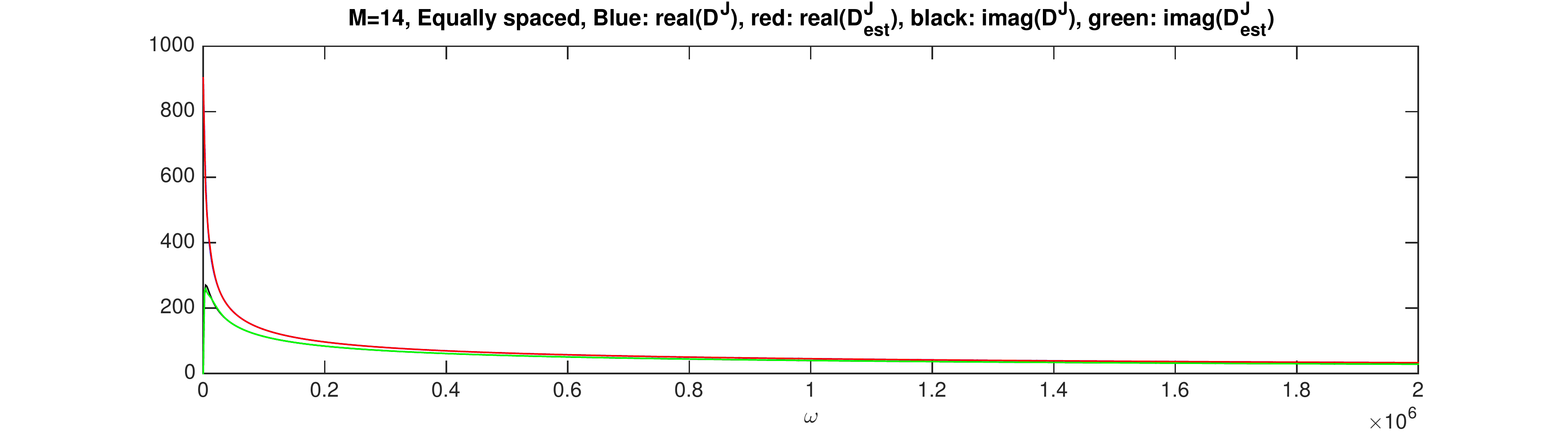}
                \caption{Equally spaced grids}
                \label{M14_equal}
        \end{subfigure}%\end{figure}
\\        
       \begin{subfigure}[h]{0.95\textwidth}
        \centering
                \includegraphics[width=\textwidth]{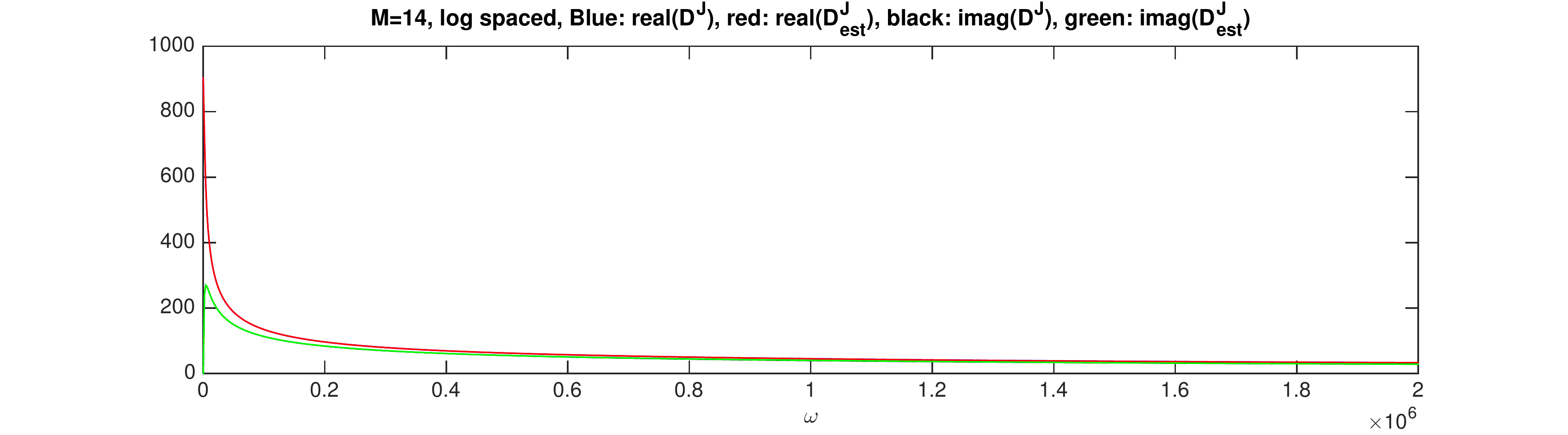}
                \caption{log-distributed grids }
                \label{M14_log}
        \end{subfigure}%\end{figure}
\caption{Comparison of $D^J(s=-i\omega)$ and $D^J_{est}(s=-i\omega)$. Blue: $real(D^J)$, Red: $real(D^J_{est})$, Black: $imag(D^J)$, Green: $imag(D^J_{est})$}
\end{figure}
%%%%%
\section{Conclusions}
In this paper, we utilize the Stieltjes function structure of the JKD dynamic tortuosity to derive an augmented system of Biot-JKD equations  \eqref{aug_b}-\eqref{aug_e} that approximates the solution of the original Biot-JKD equations \eqref{test}-\eqref{convo}. Asymptotic behavior of the tortuosity function as $\omega\goto \infty$ is enforced analytically before the numerical interpolation carried out by {   Approach} 1 and {   Approach} 2. Due to the nature of the tortuosity functions{    of S1, S4 and S5}, log-distributed interpolation points generally perform better than the equally distributed ones. We tested our {   approach}s on 5 sets of poroelastic parameters obtaining from the existing literature and interpolated the JKD dynamic tortuosity equation to high accuracy through a frequency range that spans 9 orders of magnitude from $10^{-3}$ to $2\times 10^6$. The extremely ill-conditioned matrices are dealt with by using a multiprecision package \emph{Advanpix} in which we set the significant digits of floating numbers to be {   140}. It turns out {   approach}s 1 and 2 {   give numerically identical results for all the test cases when the significant digits are set much higher than the {$log_{10}$}-scale of the condition numbers involved}. We think the exact link between these two {   approach}s can be derived through the Barycentric forms for rational approximations \cite{berrut1997lebesgue-consta}, which in term provides an {   approach} that can adapt the choice of grid points based on the data points so the Lebesgue constant is minimized  \cite{nakatsukasa2016the-aaa-algorit}. This will be explored in a later work. 
%%%

\medskip

{\it Acknowledgments.} We wish to thank Joe Ball, Daniel Alpay, Marc van Barel, Thanos Antoulas, Sanda Lefteriu, and Cosmin Ionita for their helpful suggestions. 

\clearpage
\bibliographystyle{plain}

\end{document}